\newtheorem{theorem}{Theorem}[section]
\newtheorem{lemma}[theorem]{Lemma}
\newtheorem{proposition}[theorem]{Proposition}
\newtheorem*{theoremA}{Theorem A}
\newtheorem*{corollaryA}{Corollary A}
\newtheorem*{theoremB}{Theorem B}
\newtheorem*{corollaryB}{Corollary B}
\newtheorem*{exampleA}{Example A}
\newtheorem*{exampleA1}{Example A(i)}
\newtheorem*{exampleA2}{Example A(ii)}
\newtheorem*{exampleA3}{Example A(iii)}
\newtheorem*{exampleA4}{Example A(iv)}
\newtheorem*{exampleB}{Example B}
\newtheorem*{exampleC}{Example C}
\newtheorem*{exampleD}{Example D}
\theoremstyle{definition}
\numberwithin{equation}{section}
\newcommand{\C}{{\mathbb{C}}} 
\newcommand{\R}{{\mathbb{R}}}
\def\proof {{\it Proof.\,\,}}
\def\endproof {\rule[-0.5mm]{1.2ex}{1.2ex}}
\begin{document}

\title[On global linearization of  involutions]{On global linearization of planar involutions} 

\author{Benito Pires$^{\bigstar}$}
\address{Departamento de F\'isica e Matem\'atica, Universidade de S\~ao Paulo, 14040-901, Ribeir\~ao Preto, SP,Brazil}
\email{benito@ffclrp.usp.br}
\thanks{The first author was partially supported by FAPESP-BRAZIL (2009/02380-0
and  2008/02841-4).}

\author{Marco Antonio Teixeira}
\address{Departamento de Matem\'atica, IMECC/C.P. 6065, Universidade de Campinas - UNICAMP,
13083-970, Campinas, SP, Brazil}
\email{teixeira@ime.unicamp.br}
\thanks{The second author was partially supported by FAPESP-BRAZIL (2007/06896-5).}

\thanks{$^\bigstar$ Corresponding author (benito@ffclrp.usp.br)}

%    General info
\subjclass[2000]{Primary 37C10; 37C15; 37C75}
\date{May 20, 2011}
%\keywords{Rauzy--Veech algorithm}

\begin{abstract} Let $\varphi:\R^2\to\R^2$ be an orientation--preserving $C^1$ involution such that $\varphi(0)=0$ and let ${\rm Spc}\,(\varphi)=\{{\rm Eigenvalues\,\,of}\,\, D\varphi(p)\mid p\in\R^2\}$. We prove that if ${\rm Spc}\,{(\varphi)}\subset\R$ or ${\rm Spc}\,(\varphi)\cap [1,1+\epsilon)=\emptyset$ for some $\epsilon>0$ then $\varphi$
is globally $C^1$ conjugate to the linear involution $D\varphi(0)$ via the conjugacy \mbox{$h=(I+D\varphi(0)\varphi)/2$}, where $I:\R^2\to\R^2$ is the identity map. Similarly, if $\varphi$ is an orientation-reversing $C^1$ involution such that
$\varphi(0)=0$ and ${\rm Trace}\,\big(D\varphi(0)D\varphi(p)\big)>-1 $ for all $p\in\R^2$ then $\varphi$ is globally $C^1$ conjugate to the linear involution $D\varphi(0)$ via the conjugacy $h$. Finally, we show that $h$ may fail to be a global linearization of $\varphi$ if the above conditions are not fulfilled.
\end{abstract}

\keywords{Injectivity, planar maps, involution, global linearization, asymptotic stability.}

\maketitle

\section{Introduction}

Let $\varphi:\R^2\to\R^2$ be a $C^1$ involution, that is, a $C^1$ map such that $\varphi\circ\varphi=I$, where
$I:\R^2\to\R^2$ is the identity map. It is widely known (see \cite{Mac,MZ}) that if $p$ belongs to ${\rm Fix}\,(\varphi)$, the fixed point set of $\varphi$, then around $p$ the involution $\varphi$ is locally \mbox{$C^1$ conjugate} to its linear part $D\varphi(p)$ via the conjugacy \mbox{$h=(I+D\varphi(p)\varphi)/2$}. 
In other words, there exist neighborhoods $U_p$ of $p$ and  $V_{h(p)}$ of $h(p)\in{\rm Fix}\,(D\varphi(p))$ such that $h\vert_{U_p}:U_p\to V_{h(p)}$ is an orientation-preserving $C^1$ diffeomorphism satisfying $h\circ\varphi(q)=D\varphi(p)h(q)$
for all $q\in U_p$. In this article we provide conditions on the involution $\varphi$ under which $h$ is a global linearization of $\varphi$, that is, $U_p=\R^2$. The key point is to find conditions which ensure the global injectivity of $h$. In general, $h$ may fail to be (globally) injective. 

The results of this article can also be stated in terms of the existence of a \mbox{$\varphi$-invariant} foliation for a planar involution $\varphi$. If $\varphi(0)=0$ and
$\varphi\neq I$ then there exists a $\varphi$-invariant local foliation of a neighborhood $U$ of $0$ topologically equivalent to the radial foliation (if $\varphi$ is orientation--preserving) or
to the vertical foliation (if $\varphi$ is orientation--reversing). In both cases there exists an explicit formula for the \mbox{$\varphi$-invariant} local foliation in terms of $\varphi$. We present sufficient conditions for the existence of a global foliation of $\R^2$ that is invariant by $\varphi$ and coincides with the local foliation around $0$.
 
It is not always possible to extend the local foliation induced by $h$ to a  $\varphi$-invariant global foliation. In order to do so, it is necessary to control the behaviour of the involution away from its fixed point set. More specifically, we need to know how the set ${\rm Spc}\,(\varphi)$ lies in the complex plane $\mathbb{C}$.
All it is  known is that nearby ${\rm Fix}\,({\varphi})$ the eigenvalues of $D\varphi(p)$ are close to $\{-1,1\}$. Away from ${\rm Fix}\,{(\varphi)}$, the involution $\varphi$ looks like a general nonsingular planar map.
We will need to impose some restrictions on ${\rm Spc}\,(\varphi)$ (the spectral conditions)
to get the stated results.

To reach the results we apply the theory of injectivity of planar maps to the problem of global linearization of $C^1$ involutions. Concerning injectivity results for planar maps, we would like to mention the outstanding work of Fessler \cite{F} and \mbox{Gutierrez \cite{G1}}, who solved affirmatively the bidimensional Markus--Yamabe Conjecture in 1995. Thereafter, many improvements (see \cite{CGL,FGR1,FGR2, GJLT}), variations (see \cite{BS}) and applications (see  \cite{AGG, AGA,GR, GPR,GS,R}) of that result have appeared within the mathematical literature. Other important approaches to the topic injectivity may be found in \cite{CM, C, GT, MY, O,  SX}.

With respect to linearization of involutions, there are two worth mentioning results: the Bochner-Montgomery Theorem (see \cite{MZ}) about the linearization of a compact group of transformations around a fixed point and the result of \mbox{K. Meyer \cite{KMey}} about the linearization of an antisymplectic involution of a symplectic manifold in a neighborhood of its fixed point submanifold. In addition to these, we refer the reader to the articles \cite{MMT} and \cite{T}, which deal with simultaneous local linearization of pairs of involutions.

\section{Statement of the results}

We say that a $C^1$ map $f:\R^2\to\R^2$ is {\it orientation--preserving} (respectively {\it orientation--reversing}) as
${\rm Det}\,(Df)>0$ (respectively ${\rm Det}\,(Df)<0$). We will make use of the following sets:
the fixed point set of $f$,
$${\rm Fix}\,(f)=\{p\in\R^2\mid f(p)=p\},$$
and the Spectrum of $f$,
$${\rm Spc}\,(f)=\{{\rm Eigenvalues\,\,of}\,\,Df(p)\mid p\in\R^2\}.$$

We say that a $C^r$ involution $\varphi:\R^2\to\R^2$ is {\it (globally) $C^r$ linearizable} if there exists an injective, orientation--preserving $C^r$ map $h:\R^2\to\R^2$ and $p\in{\rm Fix}\,(\varphi)$
such that $h\circ \varphi=D\varphi(p) h$.  

Throughout this paper, we assume that $0\in{\rm Fix}\,(\varphi)$. There is no loss of generality in doing that as all planar $C^1$ involution has a fixed point (see Proposition \ref{notempty}).

Given a $C^1$ involution $\varphi:\R^2\to\R^2$ such that $\varphi(0)=0$, let $h:\R^2\to\R^2$ be the $C^1$ map defined by:
$$
h=\frac{1}{2} \big( I+D\varphi(0)\varphi \big).
$$ 
We call $h$ the {\it standard map}. It turns out to be useful to write $h=\frac12 D\varphi(0)g$, where
$g=D\varphi(0)+\varphi$. It is easy to check that $h\circ \varphi=D\varphi(0)h$. Moreover, since $D\varphi(0)=I$, by the Inverse Function Theorem $h$ is injective in a neighborhood of $0$ and so it is a local linearization of $\varphi$. 

Since all orientation-preserving (respectively orientation-reversing) linear involution $\varphi\neq I$ is linearly conjugate to $-I$ (respectively to $(x,y)\mapsto (x,-y)$), there exists a unique foliation ${\mathscr F}_{D\varphi(0)}$ of $\R^2\setminus\{0\}$ by rays (respectively of $\R^2$ by lines) invariant by the \mbox{involution $D\varphi(0)$}.
We let $\mathscr{F}_{\varphi}$ denote the family of sets which are the inverse images of the leaves of $\mathscr{F}_{D\varphi(0)}$
by $h$. It is plain that if $h$ is a local $C^1$ diffeomorphism then $\mathscr{F}_\varphi$ is a $\varphi$-invariant $C^1$ foliation.

Now we state our results.
 
\begin{theoremA}\label{tmain1} An orientation--preserving $C^1$ involution $\varphi:\R^2\to\R^2$ such that $\varphi(0)=0$ is globally \mbox{$C^1$ lineari\-zable} via the standard map under any of the following hypotheses:
\begin{itemize}
\item [(a)] ${\rm Spc}\,(\varphi)=\{1\}$ $($in this case $\varphi=I)$ or
\item [(b)] ${\rm Spc}\,(\varphi)\cap [1,1+\epsilon)=\emptyset$ for some $\epsilon>0$ or
\item [(c)] ${\rm Spc}\,(\varphi)\subset\R$
\end{itemize}
\end{theoremA}

\begin{theoremB}\label{tmain3} An orientation--reversing $C^1$ involution $\varphi:\R^2\to\R^2$ such that $\varphi(0)=0$ is globally $C^1$ linearizable via the standard map if \mbox{${\rm Trace}\,\big(D\varphi(0)D\varphi(p)\big)>-1$} for all $p\in\R^2$. 
\end{theoremB}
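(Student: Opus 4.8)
The plan is to show that the standard map $h=\frac12\bigl(I+D\varphi(0)\varphi\bigr)$ is a globally injective, orientation-preserving $C^1$ map; together with the identity $h\circ\varphi=D\varphi(0)h$ and $0\in{\rm Fix}\,(\varphi)$ (both already available), this is exactly the definition of $\varphi$ being globally $C^1$ linearizable via the standard map. Since the conjugacy relation and $Dh(0)=I$ are recorded in the excerpt, everything reduces to two facts about the Jacobian $Dh(p)=\frac12\bigl(I+D\varphi(0)D\varphi(p)\bigr)$: that $h$ is everywhere an orientation-preserving local diffeomorphism, and that $h$ is globally injective.

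First I would compute the trace and determinant of $Dh(p)$, which are coordinate free so that no normalization of $D\varphi(0)$ is needed. Writing $B(p)=D\varphi(0)D\varphi(p)$, one has ${\rm Trace}\,(Dh(p))=\frac12\bigl(2+{\rm Trace}\,(B(p))\bigr)$ and, using the $2\times2$ identity $\det(I+B)=1+{\rm Trace}\,(B)+\det(B)$, one gets $\det\bigl(Dh(p)\bigr)=\frac14\bigl(1+{\rm Trace}\,(B(p))+\det(B(p))\bigr)$. Because $\varphi$ is orientation reversing we have $\det D\varphi(0)<0$ and $\det D\varphi(p)<0$, hence $\det B(p)>0$; combining this with the hypothesis ${\rm Trace}\,(B(p))>-1$ yields $\det\bigl(Dh(p)\bigr)>0$ and ${\rm Trace}\,(Dh(p))>\frac12$ for every $p\in\R^2$. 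In particular $h$ is an orientation-preserving local diffeomorphism.

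The eigenvalue analysis is then immediate: for a real $2\times2$ matrix a positive determinant together with a positive trace forces both eigenvalues into the open right half-plane (if real they share a sign, and the positive sum makes that sign positive; if complex conjugate their common real part equals half the positive trace). Hence ${\rm Spc}\,(-h)$ lies in the open left half-plane, so in particular ${\rm Spc}\,(-h)\cap[0,+\infty)=\emptyset$. Applying the planar injectivity theory of Gutierrez and Fessler \cite{F,G1} and its refinements \cite{CGL,FGR1,FGR2,GJLT} to the $C^1$ local diffeomorphism $-h$ shows that $-h$, and therefore $h$, is globally injective.

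Assembling the pieces, $h$ is a globally injective, orientation-preserving $C^1$ map satisfying $h\circ\varphi=D\varphi(0)h$, which gives the claimed global $C^1$ linearization. I expect the main obstacle to be the last step: one must make sure the pointwise spectral condition produced in the second step is stated in exactly the form required by the cited injectivity theorem, and that the local-diffeomorphism property (guaranteed by $\det Dh>0$) discharges any regularity hypothesis of that theorem. All the genuine content of Theorem B is concentrated in translating the trace hypothesis into this spectral condition; the rest is bookkeeping with the conjugacy relation.
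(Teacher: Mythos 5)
Your proposal is correct and follows essentially the same route as the paper: translate the trace hypothesis into ${\rm Trace}\,(Dh)>0$ and ${\rm Det}\,(Dh)>0$, deduce that ${\rm Spc}\,(h)$ misses $(-\infty,0]$, and invoke the Fessler--Gutierrez/Cobo--Gutierrez--Llibre injectivity theorem. The only (cosmetic) difference is that you obtain ${\rm Det}\,(Dh)>0$ from the coordinate-free identity $\det(I+B)=1+{\rm Trace}\,(B)+\det(B)$, whereas the paper computes in a basis diagonalizing $D\varphi(0)$; your version is slightly cleaner.
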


\begin{corollaryA} Let $\varphi:\R^2\to\R^2$ be an orientation--preserving $C^1$ involution such that $\varphi(0)=0$ and $\varphi\neq I$. If either ${\rm Spc}\,(\varphi)\cap [1,1+\epsilon)=\emptyset$ for some $\epsilon>0$ or ${\rm Spc}\,(\varphi)\subset\R$
then $\mathscr{F}_\varphi$ is a $\varphi$-invariant $C^1$ foliation of $\R^2\setminus\{0\}$ topologically equivalent to the radial foliation.
\end{corollaryA}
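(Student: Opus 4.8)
The plan is to deduce everything from Theorem~A together with the elementary properties of the standard map recorded just before the statement. First I would observe that, since $\varphi\neq I$ is orientation--preserving, we necessarily have $D\varphi(0)=-I$: differentiating $\varphi\circ\varphi=I$ at $0$ gives $D\varphi(0)^2=I$, so $D\varphi(0)$ is diagonalizable with eigenvalues in $\{-1,1\}$ and, being orientation--preserving, $D\varphi(0)\in\{I,-I\}$; but $D\varphi(0)=I$ would force $h\circ\varphi=D\varphi(0)h=h$, and the injectivity of $h$ granted by Theorem~A would then give $\varphi=I$, a contradiction. Hence $D\varphi(0)=-I$, so $h=\tfrac12(I-\varphi)$ and $\mathscr F_{D\varphi(0)}$ is exactly the radial foliation of $\R^2\setminus\{0\}$ by open rays issuing from the origin.

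Next I would upgrade the conclusion of Theorem~A to the statement that $h$ is a $C^1$ diffeomorphism onto its image. Under hypothesis (b) or (c), Theorem~A asserts that $\varphi$ is globally linearizable via $h$, i.e. $h$ is an injective, orientation--preserving $C^1$ map with $h\circ\varphi=D\varphi(0)h$. Being orientation--preserving means $\det Dh>0$ everywhere, so $Dh(p)$ is invertible for every $p$ and $h$ is a local $C^1$ diffeomorphism; together with global injectivity this makes $h$ a $C^1$ diffeomorphism onto the open set $\Omega:=h(\R^2)$. Since $h(0)=0$ and $h$ is injective, $h^{-1}(0)=\{0\}$ and $0\in\Omega$; moreover the relation $h\circ\varphi=-h$ shows that $\Omega$ is symmetric under $-I$.

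With this in hand the foliation is produced by pulling back. Restricting the diffeomorphism gives $h\colon\R^2\setminus\{0\}\to\Omega\setminus\{0\}$, and the members of $\mathscr F_\varphi$ are the sets $h^{-1}(\ell)$, where $\ell$ runs over the rays of $\mathscr F_{D\varphi(0)}$. As the $C^1$-diffeomorphic preimages under $h$ of the leaves of a $C^1$ foliation, these sets are the leaves of a $C^1$ foliation, and every $p\neq0$ lies on one of them because $h(p)\in\Omega\setminus\{0\}$ lies on some ray; thus $\mathscr F_\varphi$ covers $\R^2\setminus\{0\}$. Invariance is the computation $h(\varphi(h^{-1}(\ell)))=-h(h^{-1}(\ell))=-(\ell\cap\Omega)=(-\ell)\cap\Omega$, using the symmetry of $\Omega$, so $\varphi$ carries the leaf $h^{-1}(\ell)$ onto the leaf $h^{-1}(-\ell)$ sitting over the antipodal ray; hence $\mathscr F_\varphi$ is $\varphi$-invariant. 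Finally, $h$ itself is the candidate topological equivalence: it is a homeomorphism of $\R^2\setminus\{0\}$ onto $\Omega\setminus\{0\}$ carrying each leaf $h^{-1}(\ell)$ onto $\ell$.

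The delicate point — and the one I expect to be the main obstacle — is that the target of this equivalence is a priori the radial foliation of $\Omega\setminus\{0\}$ rather than of the full punctured plane, and that each $h^{-1}(\ell)$ be a connected arc rather than a union of several radial pieces. Both issues disappear once one knows that $h$ is onto, i.e. $\Omega=\R^2$, for then $h$ is a global $C^1$ diffeomorphism of $\R^2$ and $\mathscr F_\varphi$ is literally the $h$-pullback of the radial foliation. I would therefore spend the main effort verifying surjectivity of $h$: since $\Omega$ is open, nonempty and (being diffeomorphic to $\R^2$) simply connected, it suffices to show that $\Omega$ is closed, which I would extract from the properness of $h$ forced by the global linearization in Theorem~A, equivalently from the fact that the proof of Theorem~A realizes $h$ as a homeomorphism of the plane. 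Granting surjectivity, the required properties — that $\mathscr F_\varphi$ is a $C^1$ foliation of $\R^2\setminus\{0\}$, is $\varphi$-invariant, and is topologically equivalent to the radial foliation — follow at once.
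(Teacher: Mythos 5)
Your overall route coincides with the paper's: establish $D\varphi(0)=-I$, invoke Theorem A to make the standard map $h$ an injective, orientation--preserving (hence locally diffeomorphic) conjugacy between $\varphi$ and $-I$, and pull back the radial foliation $\mathscr{F}_{D\varphi(0)}$ through $h$. Your derivation of $D\varphi(0)=-I$ (diagonalizability of $D\varphi(0)$ plus injectivity of $h$ to exclude $D\varphi(0)=I$) is a clean substitute for the paper's appeal to the structure of ${\rm Fix}\,(\varphi)$ via Lemma \ref{posde} and Proposition \ref{fpop}, and your invariance computation using $h\circ\varphi=-h$ and the $-I$-symmetry of $\Omega=h(\R^2)$ makes explicit what the paper leaves implicit.

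The genuine gap is the step you yourself single out as the main obstacle: surjectivity of $h$. You propose to extract it from ``the properness of $h$ forced by the global linearization in Theorem A,'' but Theorem A forces no such thing. Its conclusion, obtained from the Fessler--Gutierrez and Cobo--Gutierrez--Llibre injectivity results (Theorem \ref{fundamental}), is only that $h$ is injective; an injective, orientation--preserving local $C^1$ diffeomorphism of the plane need not be proper or onto (consider $(x,y)\mapsto(\arctan x,\,y)$), and nothing in the proof of Theorem A realizes $h$ as a homeomorphism onto all of $\R^2$. So, as written, your last step does not close. What rescues the argument is that the paper's definition of ``topologically equivalent to the radial foliation'' only requires a homeomorphism $h:\R^2\to h(\R^2)$ onto its image carrying leaves to rays, with $h(0)=0$; under that definition the surjectivity discussion is unnecessary, $h$ itself serves as the equivalence, and the paper's own (very terse) proof stops essentially where your third paragraph ends. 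Either drop the surjectivity claim and appeal to the paper's definition, or supply an actual properness argument for $h=\tfrac12(I-\varphi)$ --- but do not attribute properness to Theorem A.
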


\begin{corollaryB} Let $\varphi:\R^2\to\R^2$ be an orientation--reversing $C^1$ involution such that $\varphi(0)=0$. 
If ${\rm Trace}\,\big(D\varphi(0)D\varphi(p)\big)>-1 $ for all $p\in\R^2$ then $\mathscr{F}_\varphi$
is a $\varphi$-invariant \mbox{$C^1$ foliation} of $\R^2$ topologically equivalent to the vertical foliation.
\end{corollaryB}

We also include a series of examples showing that the results of this paper may be applied
to many involutions. Besides, $C^1$ global linearization of a $C^1$ involution via the standard map may fail if the spectral conditions are not fulfilled. These examples are examined in detail in the final section.

\begin{exampleA} The polynomial involutions $\varphi,\psi,\phi,\xi:\R^2\to\R^2$ defined below $($for all integer $n\ge 0)$ are globally $C^\infty$ linearizable via the standard map.
\begin{itemize}
\item [(i)] $\varphi(x,y)=(x-y^{2n+1},-y);$
\item [(ii)] $\psi(x,y)=(-x+y^{2n},-y);$
\item [(iii)] $\phi(x,y)= \Big(-y-\Big(\dfrac{x+y}{2}\Big)^{2n+1},-x+\Big(\dfrac{x+y}{2}\Big)^{2n+1}\Big);$
\item [(iv)] $\xi(x,y)= \Big(-x+\Big(\dfrac{x+y}{2}\Big)^{2n},-y-\Big(\dfrac{x+y}{2}\Big)^{2n}\Big).$
\end{itemize}
\end{exampleA}

\vspace{0.2cm}

Notice that in the Example A, $\varphi$ and $\phi$ are orientation--reversing involutions whereas
$\psi$ and $\xi$ are orientation--preserving ones. Thus, $\mathscr{F}_{\varphi}$ and $\mathscr{F}_\phi$
are foliations of $\R^2$ by topological lines whereas $\mathscr{F}_\psi$ and $\mathscr{F}_\xi$ are foliations of $\R^2\setminus\{0\}$ by rays.
 \vspace{0.2cm}

\begin{exampleB} The  orientation--reversing $C^\infty$ involution $\varphi:\R^2\to\R^2$ defined by
$$\varphi(x,y)=\left (
{\rm arcsinh}\,\Big (\dfrac{ {\rm sinh}\,(x)+{\rm sinh}\,(y)}{2} \Big),
{\rm arcsinh}\, \Big (\dfrac{3{\rm sinh}\,(x)-{\rm sinh}\,(y)}{2} \Big ) \right )
$$
is globally $C^\infty$ linearizable via the standard map and $\mathscr{F}_
\varphi$ is topologically equivalent to the vertical foliation.
\end{exampleB}

\begin{exampleC} There exists an orientation-preserving $C^\infty$ involution $\varphi:\R^2\to\R^2$ that is not globally $C^1$
linearizable via the standard map. In this case, $\mathscr{F}_\varphi$ is not topologically
equivalent to the radial foliation.
\end{exampleC}

\begin{exampleD} There exists an orientation-reversing $C^\infty$ involution $\varphi:\R^2\to\R^2$ that is not globally $C^1$ linearizable
via the standard map. In this case, $\mathscr{F}_\varphi$ is not topologically equivalent to the vertical foliation.
\end{exampleD}

\section{Foliations invariant by involutions}

Let $M$ be either the whole plane $\R^2$ or the punctured plane $\R^2\setminus\{0\}$.
Let $\mathcal{F}=\{L_\alpha\}_{\alpha\in A}$ be a partition of $M$ into disjoint connected subsets called {\it leaves}. We say that $\mathcal{F}$ is a {\it $C^1$ foliation of M by curves} if every point $p\in M$ has
a neighborhood $U$ and a $C^1$ diffeomorphism $(f_1,f_2):U\to U_1\times U_2\subset\R^2$
such that for each leaf $L_\alpha$, the connected components of $L_\alpha\cap U$ are level curves of $f_1$. 

We say that a foliation $\mathcal{F}$ is {\it invariant by} an involution $\varphi:\R^2\to\R^2$, or that $\mathcal{F}$ is {\it $\varphi$-invariant} if $\varphi$ takes the leaves of $\mathcal{F}$ onto the leaves of $\mathcal{F}$.

Let $\varphi:\R^2\to\R^2$ be a $C^1$ involution. Let $\gamma:(0,+\infty)\to\R^2$ be an embedding of $(0,+\infty)$. As usual, 
we identify $\gamma$ with $\gamma ((0,+\infty))=\{\gamma(t)\mid t\in (0,+\infty)\}$. We say that $\gamma$ is a ray if $\gamma(0)=0$ and $\lim_{t\to\infty}\Vert \gamma(t)\Vert=+\infty$.

We say that a foliation $\mathcal{F}$ of $\R^2\setminus\{0\}$ (respectively of $\R^2$) is {\it topologically equivalent to the radial foliation} (respectively {\it topologically equivalent to the vertical foliation}) if there exists a homeomorphism
$h:\R^2\to h(\R^2)$ which takes each leaf of $\mathcal{F}$ in a ray (respectively in a vertical line). In the first case, $h(0)=0$.

\begin{proposition} Let $L:\R^2\to\R^2$ be a linear involution. The following statements are true:
\begin{itemize}
\item [(a)] If $L\neq I$ is orientation-preserving then there exists a $L$-invariant foliation of $\R^2\setminus\{0\}$ topologically equivalent to the radial foliation;
\item [(b)] If $L$ is orientation-reversing then there exists a $L$-invariant foliation of $\R^2$ topologically equivalent to the vertical foliation.
\end{itemize}
\end{proposition}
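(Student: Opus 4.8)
The plan is to reduce the statement to the linear classification of planar involutions and then write down the invariant foliation explicitly. Since $L^2=I$, the minimal polynomial of $L$ divides $(x-1)(x+1)$, a polynomial with distinct roots; hence $L$ is diagonalizable over $\R$ and its eigenvalues lie in $\{-1,1\}$. Because $\det L$ is the product of the eigenvalues, the orientation-preserving case forces $\det L=1$ and the orientation-reversing case forces $\det L=-1$, which splits the analysis exactly along the lines of (a) and (b).

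For part (a), an orientation-preserving involution cannot have eigenvalues $\{1,-1\}$, so both eigenvalues are $1$ or both are $-1$. In the former case diagonalizability gives $L=I$, which is excluded by hypothesis; hence both eigenvalues equal $-1$ and $L=-I$. I would then take $\mathscr{R}$ to be the radial foliation of $\R^2\setminus\{0\}$, whose leaves are the open rays emanating from the origin. This foliation is directly $L$-invariant, because $-I$ carries the ray through $v$ onto the ray through $-v$. Taking $h$ to be the identity map, which fixes $0$ and sends each ray to a ray, shows that $\mathscr{R}$ is topologically equivalent to the radial foliation, as required.

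For part (b), the two eigenvalues must be $1$ and $-1$, so there is a linear isomorphism $P:\R^2\to\R^2$ conjugating $L$ to the reflection $R(x,y)=(x,-y)$, that is, $L=P^{-1}RP$. The vertical foliation $\mathscr{V}$, with leaves the lines $\{x=c\}$, is $R$-invariant since $R$ preserves each vertical line. I would set $\mathscr{G}=\{P^{-1}(\ell)\mid \ell\in\mathscr{V}\}$, a foliation of $\R^2$ by lines, and verify invariance through $L\big(P^{-1}(\ell)\big)=P^{-1}R(\ell)=P^{-1}(\ell)$. Finally, the homeomorphism $h=P$ carries each leaf $P^{-1}(\ell)$ onto the vertical line $\ell$, which establishes topological equivalence to the vertical foliation.

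I do not expect a serious obstacle: once $L$ is known to be diagonalizable with real eigenvalues, the statement collapses to the remark already invoked in the text, namely that an orientation-preserving involution $\neq I$ equals $-I$ while an orientation-reversing one is linearly conjugate to $(x,y)\mapsto(x,-y)$. The only points needing a little care are deducing diagonalizability from $L^2=I$ and checking that the pulled-back family $\mathscr{G}$ is genuinely $L$-invariant rather than merely permuted inside a larger family; both are routine verifications.
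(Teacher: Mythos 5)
Your proof is correct and follows essentially the same route as the paper's (one-line) argument: diagonalizability of a linear involution reduces everything to the three models $I$, $-I$, and $(x,y)\mapsto(x,-y)$, after which the invariant foliations are immediate. You simply spell out the details that the paper leaves implicit.
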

\begin{proof} Every linear involution is diagonalizable and so is conjugate to one of the involutions: I, -I or
$(x,y)\mapsto (x,-y)$. 
\end{proof}

\section{Injectivity results}

Sufficient conditions for global injectivity of $C^1$ maps were provided independently by Fessler \cite{F} and Gutierrez \cite{G1} who proved that if $f:\R^2\to\R^2$ is a \mbox{$C^1$ map} such that
${\rm Spc}\,(f)\cap [0,\infty)=\emptyset$ then $f$ is injective. Later on, \mbox{Cobo--Gutierrez--Llibre \cite{CGL}}
obtained the same result under the condition ${\rm Spc}\,(f)\cap (-\epsilon,\epsilon)=\emptyset$ for some $\epsilon>0$. A slight  variation of this result is the following theorem: 

\begin{theorem}\label{fundamental} Let $f:\R^2\to\R^2$ be a $C^1$ map. If for some $\epsilon>0$,  ${\rm Spc}\,(f)\cap
[0,\epsilon)=\emptyset$ or  ${\rm Spc}\,(f)\cap
(-\epsilon,0]=\emptyset$ then $f$ is injective.
\end{theorem}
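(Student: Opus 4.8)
The plan is to reduce Theorem \ref{fundamental} to the Cobo--Gutierrez--Llibre criterion quoted above while supplying the one extra ingredient that the weaker, one-sided hypothesis demands. First I would collapse the two alternatives into one: if ${\rm Spc}\,(f)\cap(-\epsilon,0]=\emptyset$, then $-f$ has ${\rm Spc}\,(-f)=-{\rm Spc}\,(f)$, so ${\rm Spc}\,(-f)\cap[0,\epsilon)=\emptyset$; since $p\mapsto -p$ is a bijection, $f$ is injective if and only if $-f$ is. Hence I may assume ${\rm Spc}\,(f)\cap[0,\epsilon)=\emptyset$. In particular $0\notin{\rm Spc}\,(f)$, so $f$ is a local diffeomorphism everywhere and, by connectedness of $\R^2$, ${\rm Det}\,(Df)$ has constant sign.

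Next I would pin down precisely what must be added to the quoted result. Both the symmetric hypothesis ${\rm Spc}\,(f)\cap(-\epsilon,\epsilon)=\emptyset$ and the present one constrain \emph{only the real eigenvalues} of $Df$, since a non-real eigenvalue never meets a real interval; the difference is that I forbid real eigenvalues merely on $[0,\epsilon)$ and \emph{tolerate} them on $(-\epsilon,0)$. The statement is therefore strictly stronger than the quoted one, and no formal manipulation of the spectrum is available to close the gap: rescaling $f$ only dilates ${\rm Spc}\,(f)$, and translating $f$ by a multiple of the identity shifts the spectrum but alters the very injectivity under scrutiny.

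The approach is thus to revisit the foliation argument behind the Fessler--Gutierrez and Cobo--Gutierrez--Llibre theorems and to check that the obstruction it produces is one-sided. Writing $f=(f_1,f_2)$, a non-injective planar $C^1$ local diffeomorphism must carry a \emph{half-Reeb component} for the foliation by the level curves of one coordinate. The decisive claim is that such a component forces a point $p$ at which $Df(p)$ has a real eigenvalue in $[0,\epsilon)$ --- and since ${\rm Det}\,(Df)\neq 0$ rules out the eigenvalue $0$, in fact in $(0,\epsilon)$ --- contradicting the hypothesis and giving injectivity. To extract this I would track the eigenvalues of $Df$ along the leaves filling the component, combining the orientation carried by the constant sign of ${\rm Det}\,(Df)$ with a Rolle-type argument for the directional derivative of the relevant coordinate across the compact boundary leaf.

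The hard part is exactly this last step: proving that the eigenvalue created by the half-Reeb component is \emph{non-negative} and can be made arbitrarily small, not merely real and small in modulus. This one-sidedness is what separates the result from the symmetric criterion, and it is where the sign of ${\rm Det}\,(Df)$ and the internal geometry of the component must be used. I expect to split the final argument according to whether ${\rm Det}\,(Df)>0$ or ${\rm Det}\,(Df)<0$, since the configuration of real eigenvalues (two of equal sign versus two of opposite sign) differs between the two cases.
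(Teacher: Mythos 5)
The paper does not prove this theorem at all: it presents it as ``a slight variation'' of the Fessler--Gutierrez theorem (${\rm Spc}\,(f)\cap[0,\infty)=\emptyset$) and of the Cobo--Gutierrez--Llibre refinement (${\rm Spc}\,(f)\cap(-\epsilon,\epsilon)=\emptyset$), and points to \cite{FGR2} for the statement (even in the merely differentiable case). So there is no internal proof to measure you against; the question is whether your text would stand on its own as a proof, and it would not. Your opening reduction is fine: replacing $f$ by $-f$ negates the spectrum and preserves injectivity, so the two alternatives are indeed equivalent, and the observation that only real eigenvalues are constrained, so that the one-sided hypothesis is strictly weaker than the symmetric one and cannot be reached by rescaling or by adding a multiple of the identity, is a correct and useful diagnosis of where the difficulty lies.

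But from that point on the proposal is a plan rather than an argument. The two load-bearing claims --- (i) that a non-injective planar $C^1$ local diffeomorphism forces a half-Reeb component of the foliation by level curves of one coordinate (after a suitable linear normalization), and (ii) that such a component produces a point $p$ with a real eigenvalue of $Df(p)$ in $[0,\epsilon)$ --- are exactly the substance of \cite{CGL} and \cite{FGR2}, and you assert rather than prove them. Claim (i) is itself a non-trivial theorem of Gutierrez, and claim (ii), which you correctly flag as ``the hard part,'' is not obtained by a ``Rolle-type argument for the directional derivative'' along leaves; in the cited papers it goes through a careful analysis of the half-Reeb component (boundedness of its projection, an index or Poincar\'e--Bendixson-type argument, and a limiting construction producing tangencies whose associated eigenvalues are small and of a controlled sign). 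Announcing that you ``expect to split the final argument according to the sign of ${\rm Det}\,(Df)$'' does not discharge this. As submitted, the proof has a genuine gap at its central step; the honest alternatives are either to cite \cite{CGL} and \cite{FGR2} as the paper does, or to actually reproduce the half-Reeb analysis.
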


Notice that Theorem \ref{fundamental} still holds true even if $f$ is only differentiable (see \cite{FGR2}). 

%\begin{corollary}\label{corollarym}
%Let $f:\R^2\to\R^2$ be a $C^1$ closed map. If ${\rm Det}\,(Df)>0$ and \mbox{${\rm Spc}\,(f)\subset\R$}  %then
%$f$ is a diffeomorphism.
%\end{corollary}
%\begin{proof} The hypotheses imply that $f$ is a local diffeomorphism and so an open map. As $f$
%is also a closed map, we have that $f(\R^2)=\R^2$. It remains to prove that $f$ is injective. Notice that %there exist continuous real functions $\alpha,\beta:\R^2\to\R$ such that $\alpha\beta>0$
%and ${\rm Spc}\,(Df(p))=\{\alpha(p),\beta(p)\}$ for all $p\in\R^2$. Hence, either $\alpha<0$ and
%$\beta<0$ (and so ${\rm Spc}(f)\subset (-\infty,0)$) or $\alpha>0$ and $\beta>0$ (and so
%${\rm Spc}\,(-f)\subset (-\infty,0)$). In either case, by Theorem \ref{fundamental}, we have that
%$f$ is injective.
%\end{proof}

\section{Fixed point set of $C^1$ involutions}

The first step towards understanding the global behaviour of an involution  is to clarify the structure of its fixed point set. Given two points $p,q\in\R^2$, we let:
$$[p,q]=\{(t-1)p+tq\mid t\in [0,1]\}$$
denote the line segment joining $p$ and $q$. 

\begin{proposition}\label{notempty} If $\varphi:\R^2\to\R^2$ is a $C^1$ involution then ${\rm Fix}\,(\varphi)\neq\emptyset$.
\end{proposition}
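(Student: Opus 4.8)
The plan is to argue by contradiction: I assume that $\varphi$ has no fixed point and derive a topological impossibility. The first observation I would record is that $\varphi$ is a homeomorphism of $\R^2$; indeed, being its own inverse and $C^1$, it is a $C^1$ diffeomorphism. Hence the two-element set $G=\{I,\varphi\}$ is a group isomorphic to $\Z/2$ acting on $\R^2$ by homeomorphisms, and under the contradiction hypothesis (${\rm Fix}\,(\varphi)=\emptyset$) this action is \emph{free}: the nontrivial element $\varphi$ moves every point.

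Next I would upgrade the free action to a covering space action. For a free action of a finite group on a Hausdorff manifold this is automatic: given $p$, the points $p$ and $\varphi(p)$ are distinct, so I can separate them by disjoint open sets and shrink to a neighborhood $U$ of $p$ with $\varphi(U)\cap U=\emptyset$. Consequently the quotient $Q=\R^2/G$ is a surface and the projection $\pi:\R^2\to Q$ is a double covering. Since $\R^2$ is simply connected, $\pi$ is the universal cover, so $\pi_1(Q)\cong G\cong\Z/2$.

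The contradiction then comes from the structure of open surfaces. The surface $Q$ is non-compact, for a double cover of a compact space would itself be compact, whereas $\R^2$ is not. But every non-compact (open) surface is homotopy equivalent to a $1$-dimensional complex, hence has \emph{free} fundamental group; and a nontrivial free group is torsion-free and therefore cannot be $\Z/2$. This incompatibility shows the contradiction hypothesis is untenable, so ${\rm Fix}\,(\varphi)\neq\emptyset$. I would note that this argument is uniform in the orientation of $\varphi$, so it covers both the orientation-preserving and orientation-reversing cases at once.

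The step I expect to be the genuine obstacle is coping with the non-compactness of $\R^2$. The naive averaging device, namely passing to the midpoint $\tfrac12\big(p+\varphi(p)\big)$, which is honestly fixed when $\varphi$ is affine or isometric, fails for a general nonlinear involution, and the compact fixed-point theorems (Brouwer, Lefschetz) do not apply to $\R^2$ directly. The covering-space route sidesteps this by converting the dynamical assertion into the purely homotopy-theoretic clash between ``fundamental group $\Z/2$'' and ``open surface.'' An alternative I would keep in reserve is to compactify $\varphi$ to a homeomorphism $\hat\varphi:S^2\to S^2$ fixing $\infty$ and combine the Lefschetz--Hopf theorem with the normal form for periodic surface homeomorphisms (a rotation in the orientation-preserving case, a reflection in the orientation-reversing case) to forbid $\infty$ from being the only fixed point; but tracking the fixed-point index at $\infty$ makes that approach more delicate than the covering argument above.
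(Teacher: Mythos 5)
Your argument is correct, but it is a genuinely different route from the paper's. The paper stays entirely inside elementary plane topology: assuming ${\rm Fix}\,(\varphi)=\emptyset$, it minimizes $\vert p-\varphi(p)\vert$ over the points $p$ of the segment $\sigma_0=[p_0,\varphi(p_0)]$ whose image also lies on $\sigma_0$, shows that the minimizing segment $\sigma_1$ together with $\varphi(\sigma_1)$ is a $\varphi$-invariant Jordan curve, and then applies the Jordan Curve Theorem and Brouwer's Fixed Point Theorem to the compact invariant disk it bounds. You instead observe that a fixed-point-free involution generates a free $\Z/2$ action, hence a double covering $\R^2\to Q$ of an open surface with $\pi_1(Q)\cong\Z/2$, which is impossible because open surfaces have free, hence torsion-free, fundamental groups. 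Your approach is uniform in the orientation of $\varphi$, requires no construction of an invariant curve, and generalizes at once (it is the planar case of Smith's theorem that a finite cyclic group cannot act freely on a contractible finite-dimensional space, so it applies verbatim to any periodic homeomorphism of $\R^2$ and to higher dimensions); the price is that it leans on two nontrivial imported facts --- that a free finite action on a Hausdorff manifold is a covering space action with Hausdorff quotient, and that every non-compact surface is homotopy equivalent to a $1$-complex. (If you prefer to avoid the latter, note that $Q$ is aspherical with $\pi_1(Q)\cong\Z/2$, so it would be a $K(\Z/2,1)$ and would have nonzero $\Z/2$-cohomology in all degrees, contradicting that it is a $2$-manifold.) The paper's proof is longer but self-contained at the level of Jordan and Brouwer, which is more in keeping with the elementary planar toolkit used elsewhere in the article.
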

\begin{proof} Suppose that ${\rm Fix}\,(\varphi)=\emptyset$. We claim that there exist $p_1\in\R^2$ and a line segment \mbox{$\sigma=[p_1,\varphi(p_1)]$} joining $p_1$ and $\varphi(p_1)$ such that $\sigma\cup\varphi(\sigma)$ is a $\varphi$-invariant topological circle.
Indeed, let $p_0\in\R^2$ and let $\sigma_0=[p_0,\varphi(p_0)]$ be the line segment joining $p_0$ and
$\varphi(p_0)$. Let $S=\{p\in\sigma_0\mid \varphi(p)\in\sigma_0\}$. We have that $S$ is a compact non--empty set.
Besides, \mbox{$r=\inf_{p\in S} \vert p-\varphi(p)\vert>0$}, otherwise $\varphi$ would have a fixed point.
Now let $p_1\in S$ be such that $\vert p_1-\varphi(p_1)\vert=r$. It is plain that $[p_1,\varphi(p_1)]\cap \varphi([p_1,\varphi(p_1)])=\{p_1,\varphi(p_1)\}$. Hence, if $\sigma_1=[p_1,\sigma(p_1)]$ then $\sigma_1\cup \varphi(\sigma_1)$ is a $\varphi-$invariant topological circle. This proves the claim. Let $K$ be the compact region bounded by the topological circle $\sigma_1\cup\varphi(\sigma_1)$. Because $\varphi$ is a diffeomorphism of $\R^2$ taking the boundary of $K$ into itself, we have that $\varphi(K)=K$. By the Jordan Curve Theorem, we have that $K$ is homeomorphic to the closed unit ball. By the Brouwer Fixed Point Theorem, $\varphi$ has  a fixed point in $K$, which is a contradiction.
\end{proof}\\

Proposition \ref{notempty} in the case of orientation--preserving involutions is a consequence of a
stronger result (see \cite[Wandering Theorem, p. 102 ]{H}).

%The structure of the fixed point set of an involution changes drastically depending on the type of %involution we are considering.

%The case of orientation--reversing involutions is simpler. Indeed, around each $p\in{\rm Fix}\,(\varphi)$,
%$\varphi$ is $C^1$ conjugate to the linear involution $(u,v)\mapsto (u,-v)$. In this way, we have the %following result. 

%\begin{proposition}\label{fsori} If $\varphi:\R^2\to\R^2$ is an orientation--reversing involution then $\varphi$ is a one--dimensional submanifold of $\R^2$.
%\end{proposition}

Let $\varphi:\R^2\to\R^2$ be an orientation--preserving $C^1$ involution. We set:
\begin{eqnarray*}
{\rm Fix}^+\,(\varphi)&=&\{p\in{\rm Fix}\,(\varphi)\mid D\varphi(p)=I \},\\
{\rm Fix}^-\,(\varphi)&=&\{p\in{\rm Fix}\,(\varphi)\mid D\varphi(p)=-I \}.
\end{eqnarray*}
\begin{lemma}\label{simples} If $\varphi:\R^2\to\R^2$ is an orientation--preserving $C^1$ involution
then ${\rm Fix}\,(\varphi)={\rm Fix}^+\,(\varphi)\cup{\rm Fix}^-\,(\varphi).$
\end{lemma}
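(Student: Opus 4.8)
The plan is to exploit the chain rule applied to the defining relation $\varphi\circ\varphi=I$ at a fixed point, thereby reducing the statement to a purely linear--algebraic fact about involutive $2\times 2$ matrices, and then to use the orientation hypothesis to pin down the derivative exactly.

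First I would fix $p\in{\rm Fix}\,(\varphi)$ and differentiate the identity $\varphi\circ\varphi=I$ at $p$. By the chain rule, $D\varphi\big(\varphi(p)\big)\,D\varphi(p)=I$, and since $\varphi(p)=p$ this yields $A^2=I$, where $A=D\varphi(p)$. Thus $A$ is an involutive real $2\times 2$ matrix.

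Next I would extract the consequences of $A^2=I$. The matrix $A$ annihilates the polynomial $x^2-1=(x-1)(x+1)$, so its minimal polynomial divides $(x-1)(x+1)$; having distinct linear factors, it follows that $A$ is diagonalizable over $\R$ with every eigenvalue in $\{-1,1\}$. Finally I would invoke the orientation hypothesis. Since $\varphi$ is orientation--preserving we have ${\rm Det}\,(A)>0$, and ${\rm Det}\,(A)$ equals the product of the two eigenvalues of $A$. The mixed case, one eigenvalue equal to $1$ and the other to $-1$, would give ${\rm Det}\,(A)=-1<0$, contradicting orientation. Hence either both eigenvalues equal $1$ or both equal $-1$. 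Because $A$ is diagonalizable, a single repeated eigenvalue $\lambda$ forces $A=\lambda I$; therefore $A=I$ or $A=-I$, that is $p\in{\rm Fix}^+\,(\varphi)\cup{\rm Fix}^-\,(\varphi)$. The reverse inclusion being immediate from the definitions, this completes the proof.

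The argument is elementary and presents no serious obstacle; the only point requiring a little care is the passage from $A^2=I$ to diagonalizability with eigenvalues in $\{-1,1\}$, together with the remark that a diagonalizable matrix with a single eigenvalue is a scalar multiple of the identity. The orientation hypothesis is precisely what discards the reflection case and guarantees that the derivative at each fixed point is globally $\pm I$, rather than merely conjugate to a diagonal involution.
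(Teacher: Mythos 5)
Your proof is correct and follows essentially the same route as the paper: differentiate $\varphi\circ\varphi=I$ at the fixed point to get $A^2=I$, use ${\rm Det}\,(A)>0$ to force both eigenvalues to coincide in $\{-1,1\}$, and rule out a nontrivial Jordan block (the paper does this by noting a block with $b=1$ is not an involution, you via the minimal polynomial splitting into distinct linear factors), concluding $A=\pm I$.
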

\begin{proof} It follows from $\varphi$ being an orientation-preserving $C^1$ involution that
if $p\in{\rm Fix}\,({\varphi})$ then either ${\rm Spc}\,(D\varphi(p))=\{1\}$
or ${\rm Spc}\,(D\varphi(p))=\{-1\}$. Hence, by the Jordan Form Theorem, there exist $a\in\{-1,1\}$ and $b\in \{0,1\}$ such that
$[D\varphi(p)]_{\mathcal B}=\left (
\begin{matrix}
a & b\\ 0 & a
\end{matrix}\right ),
$
where $[D\varphi(p)]_{\mathcal{B}}$ denote the matrix of $D\varphi(p)$ with respect to the basis of eigenvectors $\mathcal{B}$.
As $D\varphi(p)$ is an involution, we have that $b=0$, and so $D\varphi(p)=aI$.
\end{proof}\\

\begin{lemma}\label{posde} 
If $\varphi:\R^2\to\R^2$ is an orientation-preserving $C^1$ involution then either ${\rm Fix}\,(\varphi)=
{\rm Fix}^+(\varphi)=\R^2$ or ${\rm Fix}\,(\varphi)={\rm Fix}^-(\varphi)\varsubsetneq\R^2$.
\end{lemma}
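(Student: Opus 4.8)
The plan is to exploit the decomposition ${\rm Fix}\,(\varphi)={\rm Fix}^+\,(\varphi)\cup{\rm Fix}^-\,(\varphi)$ furnished by Lemma \ref{simples} together with a connectedness argument. The observation I would aim to establish is that ${\rm Fix}^+\,(\varphi)$ is simultaneously open and closed in $\R^2$; since $\R^2$ is connected, this forces ${\rm Fix}^+\,(\varphi)$ to be either empty or the whole plane, and the two alternatives in the statement will correspond precisely to these two cases.

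First I would check that ${\rm Fix}^+\,(\varphi)$ is closed, which is immediate: it is the set of points $p$ satisfying the two closed conditions $\varphi(p)=p$ and $D\varphi(p)=I$, and both $\varphi$ and $p\mapsto D\varphi(p)$ are continuous. The heart of the argument, and the step I expect to be the main obstacle, is to show that ${\rm Fix}^+\,(\varphi)$ is also open. Here I would invoke the local linearization recalled in the introduction: around any $p\in{\rm Fix}\,(\varphi)$ the standard map $h=(I+D\varphi(p)\varphi)/2$ is a $C^1$ diffeomorphism of a neighborhood $U_p$ onto its image satisfying $h\circ\varphi=D\varphi(p)h$ on $U_p$. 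For $p\in{\rm Fix}^+\,(\varphi)$ we have $D\varphi(p)=I$, so this relation collapses to $h\circ\varphi=h$ on $U_p$. After shrinking $U_p$ so that $\varphi(U_p)\subset U_p$ (possible by continuity of $\varphi$ and $\varphi(p)=p$), injectivity of $h$ on $U_p$ forces $\varphi=I$ on $U_p$, whence $D\varphi=I$ there and $U_p\subset{\rm Fix}^+\,(\varphi)$. Thus ${\rm Fix}^+\,(\varphi)$ is open.

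Finally I would assemble the two cases produced by connectedness. If ${\rm Fix}^+\,(\varphi)=\R^2$, then $\varphi=I$ and ${\rm Fix}\,(\varphi)={\rm Fix}^+\,(\varphi)=\R^2$, giving the first alternative. If instead ${\rm Fix}^+\,(\varphi)=\emptyset$, then Lemma \ref{simples} yields ${\rm Fix}\,(\varphi)={\rm Fix}^-\,(\varphi)$; moreover ${\rm Fix}^-\,(\varphi)\varsubsetneq\R^2$, since the equality ${\rm Fix}^-\,(\varphi)=\R^2$ would mean $\varphi(p)=p$ for every $p$, hence $\varphi=I$ and $D\varphi\equiv I$, contradicting the condition $D\varphi\equiv -I$ built into the definition of ${\rm Fix}^-$. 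This delivers the second alternative ${\rm Fix}\,(\varphi)={\rm Fix}^-\,(\varphi)\varsubsetneq\R^2$, completing the dichotomy.
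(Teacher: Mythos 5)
Your proof is correct and follows essentially the same route as the paper: the decomposition from Lemma \ref{simples} plus the observation that ${\rm Fix}^+(\varphi)$ is both open (by local linearization) and closed, hence empty or all of $\R^2$ by connectedness. The only (harmless) deviation is in the final step, where the paper rules out ${\rm Fix}^-(\varphi)=\R^2$ by noting that points of ${\rm Fix}^-(\varphi)$ are isolated, whereas you derive the even more elementary contradiction that $\varphi=I$ would force $D\varphi\equiv I\neq -I$.
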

\begin{proof} By Proposition \ref{notempty}, ${\rm Fix}\,(\varphi)\neq\emptyset$. Being both open (by local linearization) and closed, ${\rm Fix}^+\,(\varphi)$ is either empty or the whole $\R^2$. In this way, by
Lemma \ref{simples}, either ${\rm Fix}\,(\varphi)=
{\rm Fix}^+(\varphi)=\R^2$ or ${\rm Fix}\,(\varphi)={\rm Fix}^-(\varphi)$.
Finally, by local linearization, if $p\in {\rm Fix}^-(\varphi)$ then $p$ is an isolated fixed point of $\varphi$.
Thus ${\rm Fix}^-(\varphi)\varsubsetneq\R^2$.
\end{proof}

\begin{proposition}\label{fpop} If $\varphi:\R^2\to\R^2$ is an orientation--preversing $C^1$ involution then
either ${\rm Fix}\,\,(\varphi)=\R^2$ or ${\rm Fix}\,(\varphi)$ is a unitary set.
\end{proposition}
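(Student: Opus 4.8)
The plan is to exploit Lemma~\ref{posde} and then run a topological argument, since Proposition~\ref{fpop} carries no spectral hypothesis and is therefore really a statement about $\varphi$ as an orientation-preserving homeomorphism of $\R^2$. By Proposition~\ref{notempty} the set ${\rm Fix}\,(\varphi)$ is nonempty, and by Lemma~\ref{posde} either ${\rm Fix}\,(\varphi)=\R^2$, in which case there is nothing to prove, or ${\rm Fix}\,(\varphi)={\rm Fix}^-(\varphi)$ consists of \emph{isolated} points at each of which $D\varphi=-I$. In the second case it suffices to show that two distinct fixed points cannot coexist, so I would assume $p,q\in{\rm Fix}\,(\varphi)$ with $p\neq q$ and derive a contradiction, using throughout that $\varphi=\varphi^{-1}$ is an orientation-preserving homeomorphism.

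The contradiction would come from an orientation obstruction attached to a $\varphi$-invariant Jordan curve. Suppose one has a simple arc $\sigma$ from $p$ to $q$ with $\sigma\cap\varphi(\sigma)=\{p,q\}$, so that $\Gamma=\sigma\cup\varphi(\sigma)$ is a $\varphi$-invariant Jordan curve. Then $\varphi$ interchanges the two open arcs of $\Gamma$ and fixes only $p$ and $q$ on $\Gamma$, so $\varphi|_\Gamma$ is an involution of the circle with exactly two fixed points; since an orientation-preserving involution of $S^1$ is conjugate to a rotation, and hence is either the identity or fixed-point-free, $\varphi|_\Gamma$ must \emph{reverse} orientation. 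On the other hand $\Gamma$ bounds a unique bounded region $K$, and $\varphi(K)=K$ because $\varphi$ is a homeomorphism fixing $\Gamma$ setwise; as $\varphi$ preserves both the orientation of $\R^2$ and the side $K$, it preserves the boundary orientation of $\Gamma$, so $\varphi|_\Gamma$ \emph{preserves} orientation. This is the desired contradiction. (Alternatively, once such a $K$ is available one may invoke the Lefschetz fixed point theorem: each fixed point has index $+1$ because $\det\big(I-D\varphi(p)\big)=\det(2I)=4>0$, whereas the Lefschetz number of any self-map of a disk equals $1$, forcing a single interior fixed point.)

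The hard part, on which I would concentrate, is producing the arc $\sigma$ — equivalently, a genuinely $\varphi$-invariant Jordan curve adapted to two fixed points. I would begin with the straight segment $\sigma_0=[p,q]$ and its image $\varphi(\sigma_0)$, another arc from $p$ to $q$. If $\varphi(\sigma_0)=\sigma_0$, then $\varphi|_{\sigma_0}$ is a continuous involution of a segment fixing both endpoints, hence increasing, hence the identity; this would make every point of $\sigma_0$ fixed, contradicting the isolation granted by Lemma~\ref{posde}, so this case cannot occur. If $\sigma_0\cap\varphi(\sigma_0)=\{p,q\}$ we simply take $\sigma=\sigma_0$. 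The remaining and genuinely delicate possibility is that the two arcs meet at additional points; here I would adapt the minimality/first-return device from the proof of Proposition~\ref{notempty}, together with the small $\varphi$-invariant circles furnished by the local linearization near $p$ and near $q$, to extract an innermost invariant simple closed curve to which the orientation obstruction above applies. Guaranteeing that this extracted curve is simultaneously simple, $\varphi$-invariant, and adapted to two fixed points is the main obstacle of the proof.
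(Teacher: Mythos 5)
Your strategy is genuinely different from the paper's: you fix \emph{two} distinct fixed points $p\neq q$ and try to join them by an arc $\sigma$ with $\sigma\cap\varphi(\sigma)=\{p,q\}$, so that $\Gamma=\sigma\cup\varphi(\sigma)$ is a $\varphi$-invariant Jordan curve, and then derive a contradiction from the fact that $\varphi|_\Gamma$ would be a non-identity, fixed-point-possessing, orientation-preserving involution of a circle. That obstruction is sound as far as it goes. The problem is that the construction of $\Gamma$ --- which you yourself flag as ``the main obstacle'' --- is precisely the missing step, and it is a genuine gap rather than a routine verification. If $[p,q]$ and $\varphi([p,q])$ meet at additional points, the natural ``first intersection'' device produces an invariant simple closed curve through $p$ alone, not through both $p$ and $q$, so the extracted curve need not be adapted to two fixed points and your $S^1$-involution argument does not apply to it; moreover the minimality device of Proposition~\ref{notempty} ($r=\inf|p-\varphi(p)|>0$) explicitly used the absence of fixed points and does not transfer. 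A further caveat: in your Lefschetz alternative, $p$ and $q$ lie on $\partial K$, not in the interior of $K$, and the fixed point index of $\varphi|_K$ at a boundary fixed point is not computed by $\det\big(I-D\varphi(p)\big)$, so that variant does not close either as stated.

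The paper avoids the two-point construction altogether. After reducing to ${\rm Fix}\,(\varphi)={\rm Fix}^-(\varphi)$ via Lemma~\ref{posde}, it anchors everything at a single fixed point $0$ and takes a ray $\gamma$ from $0$ to infinity with $\gamma\setminus\{0\}$ disjoint from ${\rm Fix}\,(\varphi)$ (possible since the fixed point set is discrete). Either $\gamma\cap\varphi(\gamma)=\{0\}$, in which case $\gamma\cup\varphi(\gamma)$ is a $\varphi$-invariant topological line whose two sides are \emph{swapped} by $\varphi$ (because $D\varphi(0)=-I$), so no fixed point can exist off the line and hence ${\rm Fix}\,(\varphi)=\{0\}$; or the first-intersection construction yields a $\varphi$-invariant topological circle through $0$, which is impossible because a map locally conjugate to $-I$ at $0$ cannot leave invariant a disk having $0$ on its boundary. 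Note that this last local obstruction would also dispatch your curve $\Gamma$ immediately ($p\in\partial K$ with $D\varphi(p)=-I$), more directly than the orientation count on $S^1$ --- but it does not supply the invariant curve you still need to construct. If you want to salvage your route, the cleanest repair is to replace the arc between two fixed points by the paper's ray to infinity from one of them.
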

\begin{proof} Suppose that ${\rm Fix}(\varphi)\neq\R^2$. By Lemma \ref{posde}, ${\rm Fix}\,(\varphi)=
{\rm Fix}^-(\varphi)$. Consequently, by local linearization, ${\rm Fix}\,(\varphi)$ is a discrete set (formed by isolated fixed points). We claim that ${\rm Fix}\,(\varphi)$ is a unitary set. By Proposition \ref{notempty}, ${\rm Fix}\,(\varphi)\neq\emptyset$. Without loss of generality we may assume that $\varphi(0)=0$. Let $\gamma\subset\R^2$ be a ray such that $\gamma\setminus\{0\}\subset\R^2\setminus {\rm Fix}\,(\varphi)$. Firstly let us consider the case in which $\gamma\cap\varphi(\gamma)=\{0\}$. In this case, $\gamma\cup\varphi(\gamma)$ is a $\varphi$-invariant topological line separating $\R^2$. Because $D\varphi(0)=-I$, we have by local linearization that $\varphi$ takes one connected component of $\R^2\setminus (\gamma\cup\varphi(\gamma))$ into the other one. Consequently, ${\rm Fix}\,(\varphi)=\{0\}$. Therefore, we may assume that
$\gamma\cap\varphi(\gamma)\supsetneq \{0\}$. Let $t_0=\inf\,\{t\in (0,\infty)\mid \gamma(0,t]\cap\varphi(\gamma(0,t])\neq\emptyset\}$. Because $\varphi$ is locally conjugate to $-I$ around $0$, we have
that $t_0>0$. Besides, by the choice of $t_0$, $C=\gamma \big([0,t_0]\big)\cup \varphi\big(\gamma\big ([0,t_0]\big) \big)$ is a $\varphi$-invariant topological circle passing through $0$. We affirm that this is impossible. Indeed, let $K$ be the compact set bounded by the topological circle $C$. By the Jordan Curve Theorem, $K$ is $\varphi$-invariant. However, this is a contradiction because $0\in C$ and around $0$ the involution $\varphi$ is topologically conjugate to the linear \mbox{involution $-I$.} 
\end{proof}\\

In this paper, Proposition \ref{fpop} will follow automatically from other hypotheses.
For instance, the condition ${\rm Spc}\,(f)\cap [1,1+\epsilon)=\emptyset$ implies that
$\#{\rm Fix}\,(f)\le 1$ ($\#$ denotes the cardinality) where $f:\R^2\to\R^2$ is any \mbox{$C^1$ map} (see \cite[Corollary 2, p. 421]{AGA}).

\section{Proof of the main results}

Henceforth, we will assume that $0$ is a fixed point of the involution $\varphi:\R^2\to\R^2$ (see Proposition \ref{notempty}). We will need some lemmas and propositions for the proof of the main results.  

\begin{lemma}\label{sr} Let $\varphi:\R^2\to\R^2$ be a $C^1$ involution. There exist continuous functions
$\lambda_j:\R^2\to\C$, $j\in\{1,2\}$, such that ${\rm Spc}\,(D\varphi(p))=\{\lambda_1(p),\lambda_2(p)\}$
for all $p\in\R^2$.
\end{lemma}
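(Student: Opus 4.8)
The plan is to show that the eigenvalues of $D\varphi(p)$ can be organized into two globally continuous functions of $p$. The natural approach is via the characteristic polynomial. First I would define the two coefficient functions
$$
T(p)={\rm Trace}\,(D\varphi(p)),\qquad D(p)={\rm Det}\,(D\varphi(p)),
$$
both of which are continuous in $p$ because $\varphi$ is $C^1$, so the entries of $D\varphi(p)$ depend continuously on $p$. The eigenvalues are the roots of $\mu^2-T(p)\mu+D(p)=0$, and the discriminant $\Delta(p)=T(p)^2-4D(p)$ is a continuous real-valued function. The subtlety is that the usual quadratic-formula roots $\frac{1}{2}\big(T(p)\pm\sqrt{\Delta(p)}\big)$ are only guaranteed continuous where $\Delta(p)$ keeps a constant sign; at points where $\Delta(p)$ passes through $0$ the complex square root can fail to be continuous if chosen naively.

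To get around this I would use the involution structure to pin down the possible eigenvalues and eliminate the sign ambiguity. Since $\varphi\circ\varphi=I$, differentiating at any $p$ gives $D\varphi(\varphi(p))\,D\varphi(p)=I$, so each $D\varphi(p)$ is invertible and its eigenvalues $\lambda$ satisfy that $1/\lambda$ is also in the spectrum (of $D\varphi(\varphi(p))$). More usefully, I would exploit that the determinant is constant in sign and the local behaviour near ${\rm Fix}\,(\varphi)$ forces the eigenvalue product $D(p)$ to be nonzero everywhere. The cleanest route, however, is simply the classical fact that for a continuous family of monic quadratics the \emph{unordered} pair of roots varies continuously: the map sending $(T,D)\in\R^2$ to the unordered pair of roots of $\mu^2-T\mu+D$ is continuous into the symmetric product of $\C$, and one can then lift to an honest pair of continuous scalar functions on the simply connected domain $\R^2$. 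Concretely, I would set
$$
\lambda_1(p)=\tfrac{1}{2}\Big(T(p)+\sqrt{\Delta(p)}\Big),\qquad
\lambda_2(p)=\tfrac{1}{2}\Big(T(p)-\sqrt{\Delta(p)}\Big),
$$
using the principal branch of the complex square root of $\Delta(p)$ viewed as a complex number, and verify continuity by treating the two regimes $\Delta(p)\ge 0$ and $\Delta(p)<0$ and checking agreement on the zero set.

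The main obstacle is the continuity of $\lambda_1,\lambda_2$ at points where $\Delta(p)=0$, i.e.\ where the two eigenvalues collide. There the square root $\sqrt{\Delta(p)}$ is $0$ from both sides, so in fact both formulas give $\lambda_1(p)=\lambda_2(p)=\tfrac12 T(p)$; the real danger would be a \emph{sign flip} of $\sqrt{\Delta}$ as $\Delta$ changes sign, which would swap $\lambda_1$ and $\lambda_2$ and break continuity of each individual function even though the unordered pair stays continuous. To handle this I would argue that on the open set $\{\Delta>0\}$ the principal square root is a positive real, on $\{\Delta<0\}$ it is a positive imaginary multiple of $\sqrt{|\Delta|}$, and in both cases $\sqrt{\Delta(p)}\to 0$ as $p$ approaches the zero set $\{\Delta=0\}$; since the complex principal square root is itself continuous on all of $\C$ (it is continuous except across the negative real axis, which $\Delta(p)$—being real—only touches at the origin where the value is $0$ anyway, so no discontinuity arises), the composition $p\mapsto\sqrt{\Delta(p)}$ is continuous everywhere. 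This gives continuity of both $\lambda_j$ directly, completing the proof. I would close by noting that $\{\lambda_1(p),\lambda_2(p)\}$ is exactly ${\rm Spc}\,(D\varphi(p))$ by construction, since they are the two roots of the characteristic polynomial.
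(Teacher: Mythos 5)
Your proposal is correct and follows essentially the same route as the paper, which simply writes down the quadratic--formula expressions $\lambda_j(p)=\tfrac12\big(\mathrm{Trace}\,(D\varphi(p))+(-1)^{j}\sqrt{\Delta(p)}\big)$ and asserts their continuity; your careful discussion of the square root's behaviour where the (real) discriminant vanishes or changes sign supplies exactly the justification the paper leaves implicit. (Only your parenthetical appeal to lifting a continuous unordered pair from the symmetric product over a simply connected domain is dubious as a general principle---compare $z\mapsto\{\pm\sqrt{z}\}$ on $\C$---but your argument does not rely on it, since the discriminant here is real--valued.)
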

\proof It follows from the Theory of Ordinary Differential Equations that the functions
$\lambda_j$, $j\in\{1,2\}$, are given by:
\begin{equation*}
\lambda_j(p)=\dfrac{{\rm Trace}\,(D\varphi(p))+(-1)^{j}\sqrt{   \big({\rm Trace}\,(D\varphi(p))  
\big)^2-4{\rm Det}\,(D\varphi(p))                    }          }{2}.
\end{equation*} \endproof

\begin{proposition}\label{pmain1} Let $\varphi:\R^2\to\R^2$ be an orientation--preserving $C^1$ involution such that $\varphi(0)=0$.
If ${\rm Spc}\,(\varphi)\cap [1,1+\epsilon)=\emptyset$ for some $\epsilon>0$  then $\varphi$ is globally $C^1$ linearizable via the standard map.
\end{proposition}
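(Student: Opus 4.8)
The plan is to reduce the whole statement to the global injectivity criterion of Theorem \ref{fundamental} applied to the standard map $h$, after first pinning down the linear part $D\varphi(0)$. First I would observe that the hypothesis ${\rm Spc}\,(\varphi)\cap[1,1+\epsilon)=\emptyset$ forces $\varphi\neq I$, since otherwise ${\rm Spc}\,(\varphi)=\{1\}$ would meet $[1,1+\epsilon)$. Hence by Lemma \ref{posde} we are in the case ${\rm Fix}\,(\varphi)={\rm Fix}^-(\varphi)$, and because $0\in{\rm Fix}\,(\varphi)$ this gives the crucial simplification $D\varphi(0)=-I$. This collapses the general expression $Dh=\frac12\bigl(I+D\varphi(0)D\varphi\bigr)$ into the affine form
$$Dh(p)=\tfrac12\bigl(I-D\varphi(p)\bigr),\qquad p\in\R^2.$$

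Next I would read off the spectrum of $h$ from this formula: if $\lambda\in{\rm Spc}\,(D\varphi(p))$ with eigenvector $v$, then $Dh(p)v=\frac{1-\lambda}{2}\,v$, so
$${\rm Spc}\,(h)=\Bigl\{\tfrac{1-\lambda}{2}\ \Big|\ \lambda\in{\rm Spc}\,(\varphi)\Bigr\}.$$
The key step is to carry the spectral hypothesis through this correspondence. I claim ${\rm Spc}\,(h)\cap(-\epsilon/2,0]=\emptyset$: if $\mu=\frac{1-\lambda}{2}$ were to lie in the real interval $(-\epsilon/2,0]$, then $\lambda=1-2\mu$ would be real and would satisfy $\lambda\in[1,1+\epsilon)$, contradicting the hypothesis; and a non-real eigenvalue $\lambda$ of $D\varphi(p)$ produces a non-real value $\frac{1-\lambda}{2}$, which trivially avoids the real interval $(-\epsilon/2,0]$. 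With this, Theorem \ref{fundamental} in its ${\rm Spc}\,(f)\cap(-\delta,0]=\emptyset$ form (with $\delta=\epsilon/2$) immediately yields that $h$ is injective.

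It remains to check that $h$ is orientation-preserving and hence a genuine linearization. Rather than controlling the sign of $\det\bigl(I-D\varphi(p)\bigr)=(1-\lambda_1(p))(1-\lambda_2(p))$ directly---which is delicate, since a priori one real eigenvalue could exceed $1$ while the other lies below it---I would argue by connectedness. Since $1\in[1,1+\epsilon)$ gives $1\notin{\rm Spc}\,(\varphi)$, the matrix $Dh(p)=\frac12(I-D\varphi(p))$ is nonsingular for every $p$, so the continuous function $p\mapsto\det Dh(p)$ never vanishes on the connected set $\R^2$ and therefore has constant sign; as $Dh(0)=\frac12(I-D\varphi(0))=I$ gives $\det Dh(0)=1>0$, that sign is positive, so $h$ is orientation-preserving. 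Combined with the identity $h\circ\varphi=D\varphi(0)h$ already recorded for the standard map, the injective, orientation-preserving $C^1$ map $h$ conjugates $\varphi$ to its linear part, which is exactly global $C^1$ linearizability via the standard map. The main obstacle is conceptual rather than computational: recognizing that the hypothesis on ${\rm Spc}\,(\varphi)$ must be pushed through the affine change $\lambda\mapsto\frac{1-\lambda}{2}$ to land precisely in the $(-\delta,0]$ regime of Theorem \ref{fundamental}, and that orientation-preservation is cleanest via nonvanishing of $\det Dh$ together with connectedness.
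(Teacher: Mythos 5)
Your proof is correct and follows essentially the same route as the paper: both reduce the statement to Theorem \ref{fundamental} after observing that $D\varphi(0)=-I$ and that the spectral hypothesis transforms, under an affine shift of the spectrum, into one of the two admissible forms of that theorem. The only cosmetic differences are that the paper applies the theorem to $g=D\varphi(0)+\varphi$ (whose spectrum avoids $[0,\epsilon)$) whereas you apply it directly to $h$ (whose spectrum avoids $(-\epsilon/2,0]$), and that you make explicit the orientation-preservation check that the paper leaves implicit.
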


\begin{proof}  By Lemma \ref{posde} and Proposition \ref{fpop}, either ${\rm Fix}\,(\varphi)={\rm Fix}^+\,(\varphi)=\R^2$ (and so $\varphi=I$) or ${\rm Fix}\,(\varphi)={\rm Fix}^-\,(\varphi)$ is a unitary set. In this case, ${\rm Fix}\,(\varphi)=\{0\}$ and $D\varphi(0)=-I$.
 By Lemma \ref{sr}, there exist continuous functions $\lambda_j:\R^2\to\C$, $j\in\{1,2\}$, such that
 ${\rm Spc}\,(D\varphi(p))=\{\lambda_1(p),\lambda_2(p)\}$ for all $p\in\R^2$.
 By the Jordan Form Theorem, for each $p\in\R^2$, there exist a linear isomorphism $S(p):\C^2\to\C^2$ and a diagonal linear operator
$J(p):\C^2\to\C^2$ defined by $J(p)(u,v)=(\lambda_1(p) u,\lambda_2(p) v)$ such that
\begin{eqnarray}\label{thiseqq}
\hspace{1cm} D\varphi(0)+D\varphi(p)=-I+S(p) J(p) (S(p))^{-1}=S(p) (-I+J(p)) (S(p))^{-1}. 
\end{eqnarray}
It follows at once from (\ref{thiseqq}) that ${\rm Spc}\,(g)={\rm Spc}\,(\varphi)-1$, where 
$g:\R^2\to\R^2$ is the map $g=D\varphi(0)+\varphi$. The hypothesis ${\rm Spc}\,(\varphi)\cap [1,1+\epsilon)=\emptyset$ implies that ${\rm Spc}\,(g)\cap [0,\epsilon)=\emptyset$. It follows from Theorem \ref{fundamental}  that $g$ is globally injective. Thus $h=\frac12D\varphi(0)g$ is also globally injective.
\end{proof} \\
\begin{proposition}\label{pmain2} Let $\varphi:\R^2\to\R^2$ be an orientation--preserving $C^1$ involution such that $\varphi(0)=0$.
If ${\rm Spc}\,(\varphi)\subset\R$ then $\varphi$ is globally $C^1$ linearizable via the standard map.
\end{proposition}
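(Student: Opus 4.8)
The plan is to follow the proof of Proposition~\ref{pmain1} verbatim up to the point where the spectral hypothesis is used, and then replace the gap condition by an argument that pins down the location of ${\rm Spc}\,(\varphi)$ exactly. First I would invoke Lemma~\ref{posde} and Proposition~\ref{fpop} to split into two cases: either ${\rm Fix}\,(\varphi)={\rm Fix}^+\,(\varphi)=\R^2$, in which case $\varphi=I$ and the standard map is the identity (trivially a linearization), or ${\rm Fix}\,(\varphi)={\rm Fix}^-\,(\varphi)=\{0\}$ and $D\varphi(0)=-I$. In the nontrivial case $g=D\varphi(0)+\varphi=-I+\varphi$, so $Dg(p)=-I+D\varphi(p)$ and, exactly as in (\ref{thiseqq}), ${\rm Spc}\,(g)={\rm Spc}\,(\varphi)-1$.

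The crux is to locate ${\rm Spc}\,(\varphi)$ precisely. By Lemma~\ref{sr} there are continuous functions $\lambda_1,\lambda_2:\R^2\to\C$ giving the eigenvalues of $D\varphi(p)$; the hypothesis ${\rm Spc}\,(\varphi)\subset\R$ makes the discriminant $\big({\rm Trace}\,(D\varphi(p))\big)^2-4\,{\rm Det}\,(D\varphi(p))$ nonnegative everywhere, so the explicit formula of Lemma~\ref{sr} shows that both $\lambda_j$ are real-valued and continuous, with $\lambda_1(p)\le\lambda_2(p)$. Since $\varphi$ is orientation-preserving, $\lambda_1(p)\lambda_2(p)={\rm Det}\,(D\varphi(p))>0$ for every $p$; in particular neither eigenvalue ever vanishes. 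Now I would use connectedness: each $\lambda_j$ is a continuous, nowhere-zero real function on the connected set $\R^2$, hence of constant sign, and at the origin $\lambda_1(0)=\lambda_2(0)=-1<0$ because $D\varphi(0)=-I$. Therefore $\lambda_1(p)<0$ and $\lambda_2(p)<0$ for all $p\in\R^2$.

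Given this, the rest is immediate. From $\lambda_j(p)<0$ we get ${\rm Spc}\,(g)={\rm Spc}\,(\varphi)-1\subset(-\infty,-1)$, which is disjoint from $[0,\epsilon)$ for every $\epsilon>0$, so Theorem~\ref{fundamental} shows that $g$ is globally injective and hence so is $h=\frac12 D\varphi(0)g$. The same negativity also yields that $h$ is orientation-preserving: $Dh(p)=\frac12\big(I-D\varphi(p)\big)$ has eigenvalues $\frac12\big(1-\lambda_j(p)\big)>0$, so ${\rm Det}\,(Dh(p))>0$ throughout. Combined with the identity $h\circ\varphi=D\varphi(0)h$ already verified for the standard map, this delivers the global $C^1$ linearization. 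The one genuine obstacle is the middle step, namely ruling out eigenvalues in $[1,\infty)$; I expect the constant-sign/connectedness argument, resting on ${\rm Det}\,(D\varphi)>0$ together with $D\varphi(0)=-I$, to be precisely what closes that gap, while the case reduction and the final appeal to Theorem~\ref{fundamental} are routine.
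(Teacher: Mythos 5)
Your proof is correct and takes essentially the same route as the paper's: reduce to the case $D\varphi(0)=-I$, use continuity of the real eigenvalues together with ${\rm Det}\,(D\varphi)>0$ and $\lambda_j(0)=-1$ to force ${\rm Spc}\,(\varphi)\subset(-\infty,0)$, deduce ${\rm Spc}\,(g)\subset(-\infty,-1]$ for $g=D\varphi(0)+\varphi$, and apply Theorem~\ref{fundamental}. The only difference is that you spell out the connectedness argument and the orientation check on $h$ that the paper leaves implicit.
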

\begin{proof} The same proof of Proposition \ref{pmain1} works with some slight changes. We have that ${\rm Fix}\,(\varphi)=\{0\}$ and $D\varphi(0)=-I$.
 By Lemma \ref{sr}, there exist continuous functions $\lambda_j:\R^2\to\R\setminus\{0\}$, $j\in\{1,2\}$, such that
 ${\rm Spc}\,(D\varphi(p))=\{\lambda_1(p),\lambda_2(p)\}\subset\R$ for all $p\in\R^2$.
 By the Jordan Form Theorem, for each $p\in\R^2$, there exist a linear isomorphism $S(p):\R^2\to\R^2$ and a lower triangular linear operator
$J(p):\R^2\to\R^2$ defined by $J(p)(u,v)=(\lambda_1(p) u+c(p)v,\lambda_2(p) v)$, for some $c(p)\in \{0,1\}$, such that
\begin{eqnarray}\label{thiseq}
\hspace{1cm} D\varphi(0)+D\varphi(p)=-I+S(p) J(p) (S(p))^{-1}=S(p) (-I+J(p)) (S(p))^{-1}. 
\end{eqnarray}
It follows at once from (\ref{thiseq}) that ${\rm Spc}\,(g)={\rm Spc}\,(\varphi)-1$, where 
$g:\R^2\to\R^2$ is the map $g=D\varphi(0)+\varphi$. Because $\lambda_1(0)=-1=\lambda_2(0)$
and ${\rm Spc}\,(\varphi)\subset\R\setminus\{0\}$, we have that $\lambda_1(p)<0$ and $\lambda_2(p)<0$ for all
$p\in\R^2$. Hence, by the above, ${\rm Spc}\,(g)\subset (-\infty,-1]$. Theorem \ref{fundamental} implies that $g$ is globally injective. Hence, $h=\frac12D\varphi(0)g$ is also globally injective.
\end{proof}

\begin{theoremA}\label{tmain1} An orientation--preserving $C^1$ involution $\varphi:\R^2\to\R^2$ such that $\varphi(0)=0$ is globally \mbox{$C^1$ lineari\-zable} via the standard map under any of the following hypotheses:
\begin{itemize}
\item [(a)] ${\rm Spc}\,(\varphi)=\{1\}$ $($in this case $\varphi=I)$ or
\item [(b)] ${\rm Spc}\,(\varphi)\cap [1,1+\epsilon)=\emptyset$ for some $\epsilon>0$ or
\item [(c)] ${\rm Spc}\,(\varphi)\subset\R$
\end{itemize}
\end{theoremA}
\begin{proof} It follows immediately from Propositions \ref{pmain1} and \ref{pmain2}.
\end{proof}

\begin{corollaryA} Let $\varphi:\R^2\to\R^2$ be an orientation--preserving $C^1$ involution such that $\varphi(0)=0$ and $\varphi\neq I$. If either ${\rm Spc}\,(\varphi)\cap [1,1+\epsilon)=\emptyset$ for some $\epsilon>0$ or ${\rm Spc}\,(\varphi)\subset\R$
then $\mathscr{F}_\varphi$ is a $\varphi$-invariant $C^1$ foliation of $\R^2\setminus\{0\}$ topologically
equivalent to the radial foliation. 
\end{corollaryA}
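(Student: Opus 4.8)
The plan is to read the corollary off Theorem~A together with the definitions of Sections~2 and~3; almost all of the work has already been carried out in the proofs of Propositions~\ref{pmain1} and~\ref{pmain2}. First I would record what those proofs supply. Since $\varphi\neq I$, Lemma~\ref{posde} and Proposition~\ref{fpop} force ${\rm Fix}\,(\varphi)=\{0\}$ and $D\varphi(0)=-I$, and the spectral hypothesis translates into $0\notin{\rm Spc}\,(g)$ for the map $g=D\varphi(0)+\varphi$; indeed ${\rm Spc}\,(g)={\rm Spc}\,(\varphi)-1$, which misses $[0,\epsilon)$ in case~(b) and lies in $(-\infty,-1]$ in case~(c), so in both cases $0\notin{\rm Spc}\,(g)$. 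Hence $Dg(p)$ is invertible for every $p$, so $g$---and therefore $h=\frac12 D\varphi(0)g$---is a local $C^1$ diffeomorphism; by Theorem~A it is moreover globally injective and orientation-preserving. Thus $h:\R^2\to W:=h(\R^2)$ is an orientation-preserving $C^1$ diffeomorphism onto an open set $W$, with $h(0)=\frac12\big(0-\varphi(0)\big)=0$, so $0\in W$.

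Next I would identify the two foliations. Because $D\varphi(0)=-I$, the invariant foliation $\mathscr{F}_{D\varphi(0)}$ is the radial foliation of $\R^2\setminus\{0\}$ by open rays. Since $h$ is a local $C^1$ diffeomorphism, the remark made in Section~2 just before the statement of the results applies verbatim: $\mathscr{F}_\varphi$, the family of inverse images $h^{-1}(L)$ of the leaves $L$ of $\mathscr{F}_{D\varphi(0)}$, is a $\varphi$-invariant $C^1$ foliation. Its leaves cover $h^{-1}(\R^2\setminus\{0\})=\R^2\setminus\{0\}$ because $h$ is injective with $h(0)=0$, so $\mathscr{F}_\varphi$ foliates $\R^2\setminus\{0\}$. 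Should one wish to verify the $\varphi$-invariance by hand, the conjugacy $h\circ\varphi=D\varphi(0)h=-h$ together with the symmetry $W=h(\varphi(\R^2))=-h(\R^2)=-W$ shows that $\varphi$ carries the leaf $h^{-1}(L)$ onto $h^{-1}(-L)$, which is again a leaf.

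Finally, for the topological equivalence I would exhibit $h$ itself as the required homeomorphism. By the definition in Section~3, $\mathscr{F}_\varphi$ is topologically equivalent to the radial foliation as soon as there is a homeomorphism $H:\R^2\to H(\R^2)$ with $H(0)=0$ sending each leaf into a ray. The standard map $h$ is a homeomorphism onto $h(\R^2)$, it fixes the origin, and by the very construction of $\mathscr{F}_\varphi$ it sends each leaf $h^{-1}(L)$ into the ray $L$. This is precisely the asserted equivalence.

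In short, the corollary is bookkeeping once Theorem~A is in hand, the single essential ingredient being that the spectral hypotheses force $0\notin{\rm Spc}\,(g)$ and hence make $h$ a local---indeed a global, onto $W$---$C^1$ diffeomorphism. The one point I would write out with care is the meaning of the word ``foliation'' for $\mathscr{F}_\varphi$: one should check that its leaves are connected and that it is locally trivial, both of which follow from $h$ being a diffeomorphism onto the open set $W$ and from $\mathscr{F}_{D\varphi(0)}$ being a $C^1$ foliation of $W\setminus\{0\}$. I expect this verification, together with the matching of the definition of topological equivalence, to be the main---though modest---obstacle.
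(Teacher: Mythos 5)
Your proposal is correct and follows essentially the same route as the paper: invoke Theorem~A (via Lemma~\ref{posde} and Proposition~\ref{fpop}) to get $D\varphi(0)=-I$ and the global injectivity of the standard map $h$, observe that $h$ is a local $C^1$ diffeomorphism, and realize $\mathscr{F}_\varphi$ as the pullback by $h$ of the radial foliation, with $h$ itself serving as the topological equivalence. Your version merely spells out details the paper leaves implicit (the covering of $\R^2\setminus\{0\}$, the $\varphi$-invariance via $h\circ\varphi=-h$, and the local-diffeomorphism property from $0\notin{\rm Spc}\,(g)$).
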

\begin{proof}  By the proof of Proposition \ref{pmain1}, $D\varphi(0)=-I$. By \mbox{Theorem A}, the standard map 
\mbox{$h=\frac12 (I+D\varphi(0)\varphi$)} is a \mbox{$C^1$ conjugacy} between $\varphi$ and $-I$. By definition,
$\mathscr{F}_\varphi$ is the pullback by $h$ of the radial foliation $\mathscr{F}_{D\varphi(0)}$. In this way, because $h$ is a local $C^1$ diffeomorphism, $\mathscr{F}_{\varphi}$ is a $C^1$ foliation of $\R^2\setminus\{0\}$ by rays. 
\end{proof}

\begin{theoremB}\label{tmain3} An orientation--reversing $C^1$ involution $\varphi:\R^2\to\R^2$ such that $\varphi(0)=0$ is globally  $C^1$ linearizable via the standard map if  \mbox{${\rm Trace}\,\big(D\varphi(0)D\varphi(p)\big)>-1$} for all $p\in\R^2$. 
\end{theoremB}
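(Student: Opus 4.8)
The plan is to avoid the map $g=D\varphi(0)+\varphi$ used in Propositions~\ref{pmain1} and~\ref{pmain2} and to work directly with the standard map $h=\tfrac12 D\varphi(0)g=\tfrac12\big(I+D\varphi(0)\varphi\big)$. The reason is that in the orientation--reversing case $g$ is the wrong object: since $D\varphi(0)$ and $D\varphi(p)$ both have negative determinant, a short computation gives ${\rm Det}\,(Dg(p))<0$ for every $p$, so ${\rm Spc}\,(g)$ consists of pairs of real eigenvalues of opposite sign straddling $0$, and neither alternative of Theorem~\ref{fundamental} can hold for $g$. Composing with $D\varphi(0)$ repairs this, and the whole argument reduces to controlling the trace and determinant of $Dh$.

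First I would record the normalizations. Because $\varphi$ is orientation--reversing and $\varphi(0)=0$, the linear involution $D\varphi(0)$ satisfies $D\varphi(0)^2=I$ and ${\rm Det}\,(D\varphi(0))=-1$ (its eigenvalues are $1$ and $-1$). Writing $C(p)=D\varphi(0)\,D\varphi(p)$ and differentiating $h=\tfrac12 D\varphi(0)\big(D\varphi(0)+\varphi\big)$, the relation $D\varphi(0)^2=I$ yields $Dh(p)=\tfrac12\big(I+C(p)\big)$. Two elementary $2\times2$ identities then apply: ${\rm Trace}\,(Dh(p))=1+\tfrac12{\rm Trace}\,(C(p))$ and, from ${\rm Det}\,(I+C)=1+{\rm Trace}\,(C)+{\rm Det}\,(C)$, also ${\rm Det}\,(Dh(p))=\tfrac14\big(1+{\rm Trace}\,(C(p))+{\rm Det}\,(C(p))\big)$. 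The hypotheses now enter exactly where needed. The assumption ${\rm Trace}\,\big(D\varphi(0)D\varphi(p)\big)>-1$ is precisely ${\rm Trace}\,(C(p))>-1$, whence ${\rm Trace}\,(Dh(p))>\tfrac12$; and ${\rm Det}\,(C(p))={\rm Det}\,(D\varphi(0)){\rm Det}\,(D\varphi(p))=(-1){\rm Det}\,(D\varphi(p))>0$, since $\varphi$ orientation--reversing forces ${\rm Det}\,(D\varphi(p))<0$ for all $p$. Combining the two inequalities gives ${\rm Det}\,(Dh(p))>0$ as well.

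Thus $Dh(p)$ is, for every $p$, a real $2\times2$ matrix of positive trace and positive determinant, and the concluding observation is purely about such matrices: none of their eigenvalues is a non-positive real number (complex--conjugate eigenvalues have real part equal to half the trace, hence positive; real eigenvalues have positive product and positive sum, hence both positive). Consequently ${\rm Spc}\,(h)\cap(-\infty,0]=\emptyset$, so ${\rm Spc}\,(h)\cap(-\epsilon,0]=\emptyset$ for some $\epsilon>0$, and Theorem~\ref{fundamental} gives that $h$ is injective. Since ${\rm Det}\,(Dh)>0$, $h$ is orientation--preserving, and $h\circ\varphi=D\varphi(0)h$ holds by construction, so $h$ is a global $C^1$ linearization of $\varphi$. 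I expect the only genuine subtlety to be the initial decision to analyze $h$ rather than $g$, together with the sign bookkeeping that converts the single scalar hypothesis ${\rm Trace}\,(C)>-1$ into the two positivity statements ${\rm Trace}\,(Dh)>0$ and ${\rm Det}\,(Dh)>0$; everything after that is routine.
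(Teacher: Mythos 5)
Your proof is correct and takes essentially the same route as the paper's: both establish ${\rm Trace}\,(Dh)>0$ and ${\rm Det}\,(Dh)>0$ and then invoke Theorem \ref{fundamental}, the only difference being that you package the determinant computation via the identity ${\rm Det}\,(I+C)=1+{\rm Trace}\,(C)+{\rm Det}\,(C)$ while the paper works with explicit matrix entries in an eigenbasis of $D\varphi(0)$. (Your motivational aside that ${\rm Det}\,(Dg(p))<0$ follows from the negative determinants alone is imprecise --- the paper's computation shows this also uses the trace hypothesis --- but it plays no role in your actual argument.)
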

\begin{proof} Let $\mathcal{B}$ be the eigenvectors basis with respect to which the representation matrix of $D\varphi(0)$ is the diagonal matrix:
\begin{equation*}\label{ddd}
[D\varphi(0)]_{{\mathcal{B}}}=\begin{pmatrix}
1 & 0\\
0 & -1
\end{pmatrix}.
\end{equation*}
Notice that ${\rm Trace}\,(Dh(p))=\frac12\Big(2+{\rm Trace}\,\big(D\varphi(0)D\varphi(p)\big)\Big)>\frac12>0$
for all $p\in\R^2$. Now fix $p\in\R^2$ and let $a,b,c,d\in\R$ be such that
\begin{equation*}\label{ddd}
[D\varphi(p)]_{{\mathcal{B}}}=\begin{pmatrix}
a & b\\
c & d
\end{pmatrix}.
\end{equation*}
Notice that $a-d={\rm Trace}\,([D\varphi(0)]_{\mathcal{B}}
[D\varphi(p)]_{\mathcal{B}})={\rm Trace}\,(D\varphi(0)D\varphi(p))>-1$. Hence,
$d-a-1<0$.
On the other hand
$${\rm Det}\,(D\varphi(0)+D\varphi(p))=(a+1)(d-1)-bc=(ad-bc)+d-a-1=\underbrace{{\rm Det}\,(D\varphi(p))}_{<0}+\underbrace{d-a-1}_{<0}<0.$$ Therefore, ${\rm Det}\,(Dh(p))=\frac14{\rm Det}\,\big(I+D\varphi(0)D\varphi(p)\big)=\frac14 {\rm Det}\,\big(D\varphi(0)\big)\cdot {\rm Det}\big(D\varphi(0)+D\varphi(p) \big)>0$ for all $p\in\R^2$. The inequalities ${\rm Trace}\,(Dh)>0$ and ${\rm Det}\,(Dh)>0$
 imply that \mbox{${\rm Spc}\,(h)\cap (-\infty,0]=\emptyset$}. By Theorem \ref{fundamental}, $h$ is globally injective.
\end{proof}

\begin{corollaryB} Let $\varphi:\R^2\to\R^2$ be an orientation--reversing $C^1$ involution such that $\varphi(0)=0$. 
If ${\rm Trace}\,\big(D\varphi(0)D\varphi(p)\big)>-1 $ for all $p\in\R^2$ then $\mathscr{F}_\varphi$
is a $\varphi$-invariant \mbox{$C^1$ foliation} of $\R^2$ topologically equivalent to the vertical foliation.
\end{corollaryB}
\begin{proof} Every linear involution is linearly diagonalizable. Hence, there exists a linear transformation
$S:\R^2\to\R^2$ such that $SD\varphi(0)S^{-1}=L$, where $L(x,y)=(x,-y)$. By Theorem B, $\varphi$ is globally $C^1$ linearizable via $h$. These two facts together imply that
$Sh\circ \varphi=LSh$. In other words, $\{p\in\R^2\mid \pi_1\big(Sh(p)\big)={\rm constant}\}$ is a $\varphi$-invariant global foliation
of $\R^2$ topologically equivalent to the vertical foliation $\mathcal{F}_{L}$, where $\pi_1:\R^2\to\R$ is the canonical projection $(x,y)\mapsto x$. It is not difficult to show that such a foliation coincides
with $\mathcal{F}_\varphi$.
\end{proof}

\section {Examples}

In this section, we prove that the examples presented in the section ``Statement of the results" have
the properties announced therein. In what follows, we let $p=(x,y)$. The canonical basis of $\R^2$ is denoted by $E$. 

\begin{exampleA1} Let $\varphi:\R^2\to\R^2$ be the orientation--reversing involution defined by
$\varphi(x,y)=(x-y^{2n+1},-y)$.
The matrix of $D\varphi(0)D\varphi(p)$ is given by:
\begin{equation*}
[D\varphi(0)D\varphi(p)]_E=
\begin{pmatrix}
1 &  -(2n+1) y^{2n} \\
0  &  1
\end{pmatrix}.
\end{equation*}
Thus ${\rm Trace}\,(D\varphi(0)D\varphi(p))=2>-1$. 
By Theorem B,  $h:\R^2\to\R^2$ defined by \mbox{$h(x,y)=\Big(x-\dfrac{y^{2n+1}}{2},y\Big)$} is a $C^\infty$ linearization \mbox{of $\varphi$}. By Corollary B, the family of curves 
$$\{2x-y^{2n+1}={\rm constant}\}$$ is a $\varphi$-invariant \mbox{$C^\infty$ foliation} of $\R^2$ topologically equivalent to the vertical foliation.
\end{exampleA1}

\begin{exampleA2} Let $\psi:\R^2\to\R^2$ be the orientation--preserving involution defined by
$\psi(x,y)=(-x+y^{2n},-y)$. The matrix  of $D\psi(p)$ is:
\begin{equation*}
[D\psi(p)]_E=\begin{pmatrix}
-1 & 2ny^{2n-1}\\
0 & -1
\end{pmatrix}.
\end{equation*}
Therefore, ${\rm Spc}(\psi)=\{-1\}\subset\R$ and $(c)$ of Theorem A implies that $\psi$ is globally
$C^1$ linearizable via the standard map. In this case, $h(x,y)=\Big(x-\dfrac{y^{2n}}{2},y\Big)$ is a global $C^1$ linearization of $\psi$ and, by Corollary A, $\mathscr{F}_\psi$ is topologically equivalent to the radial foliation.
\end{exampleA2}

\begin{exampleA3} Let $\phi:\R^2\to\R^2$ be the orientation--reversing involution defined by
$$\phi(x,y)= \Big(-y-\Big(\frac{x+y}{2}\Big)^{2n+1},-x+\Big(\frac{x+y}{2}\Big)^{2n+1}\Big).$$
The matrix of $D\phi(0)D\phi(p)$ is:
\begin{equation*}
[D\phi(0)D\phi(p)]_E=\begin{pmatrix}
1-\alpha(p) & -\alpha(p)\\
\alpha(p) &  1+\alpha(p)
\end{pmatrix},
\end{equation*}
where $\alpha(x,y)=\dfrac12(2n+1)\Big(\dfrac{x+y}{2}\Big)^{2n}$. Thus
${\rm Trace}\,(D\phi(0)D\phi(p))=2>-1$.
By Theorem B and by \mbox{Corollary B},  $\phi$ is globally $C^\infty$ linearizable via the standard map and that $\mathscr{F}_\phi$ is topologically equivalent to the vertical foliation.
\end{exampleA3}

\begin{exampleA4} Let $\xi:\R^2\to\R^2$ be the orientation-preserving involution defined by
$$\xi(x,y)= \Big(-x+\Big(\frac{x+y}{2}\Big)^{2n},-y-\Big(\frac{x+y}{2}\Big)^{2n}\Big).$$
The matrix of $D\xi(p)$ is given by:
\begin{equation*}
[D\xi(p)]_E=\begin{pmatrix}
\beta(p)-1 & \beta(p)\\
-\beta(p) &  -1-\beta(p)
\end{pmatrix},
\end{equation*}
where $\beta(x,y)=n\Big(\dfrac{x+y}{2}\Big)^{2n-1}$. Thus, ${\rm Spc}\,(\xi)=\{-1\}$. By Theorem A, as ${\rm Spc}\,(\xi)\subset\R$, we have that $\xi$ is globally $C^\infty$ linearizable
via the standard map. Besides, by Corollary A, $\mathscr{F}_\xi$ is topologically equivalent to the radial foliation. 
\end{exampleA4}

\begin{exampleB} Let $\varphi:\R^2\to\R^2$ be the orientation--reversing $C^\infty$ involution defined by
$$\varphi(x,y)=\left (
{\rm arcsinh}\,\Big (\dfrac{ {\rm sinh}\,(x)+{\rm sinh}\,(y)}{2} \Big),
{\rm arcsinh}\, \Big (\dfrac{3{\rm sinh}\,(x)-{\rm sinh}\,(y)}{2} \Big ) \right ).
$$

One may show that $\varphi$ is a $C^\infty$ involution such that:

\begin{equation*}
[D\varphi(p)]_E=
\begin{pmatrix}
\dfrac{{\rm cosh}\,(x)}{\sqrt{4+({\rm sinh}\,(x)+{\rm sinh}\,(y))^2 }} &  \dfrac{{\rm cosh}\,(y)}{\sqrt{4+({\rm sinh}\,(x)+{\rm sinh}\,(y))^2 }}\\
\dfrac{3{\rm cosh}\,(x)}{\sqrt{4+(3{\rm sinh}\,(x)-{\rm sinh}\,(y))^2 }} & -\dfrac{{\rm cosh}\,(y)}{\sqrt{4+(3{\rm sinh}\,(x)-{\rm sinh}\,(y))^2 }}
\end{pmatrix}.
\end{equation*}

Notice that for each $p\in\R^2$ there exist positive real numbers $a,b,c,d>0$, depending on $p$, such that the matrix of $D\varphi(p)$ with respect to the canonical basis is given by:

\begin{equation*}
[D\varphi(0)]_E=\frac12
\begin{pmatrix}
1 & 1\\
3 & -1
\end{pmatrix},\\ \quad
[D\varphi(p)]_E=
\begin{pmatrix}
a & b\\
c & -d
\end{pmatrix}.
\end{equation*}
Hence, 

\begin{equation*}
[D\varphi(0)D\varphi(p)]_E=\frac12
\begin{pmatrix}
a+c & b-d\\
3a-c & 3b+d
\end{pmatrix}.
\end{equation*}
In this way, ${\rm Trace}\,(D\varphi(0)D\varphi(p))=a+3b+c+d>0>-1$.
By Theorem B, $h$ is a global $C^\infty$ linearization of $\varphi$. By Corollary $B$,
$\mathscr{F}_\varphi$ is a $\varphi$-invariant $C^1$ foliation of $\R^2$ topologically equivalent to the vertical foliation.
\end{exampleB}

\begin{exampleC} In this example we deform the orientation--preserving linear involution $\varphi=-I$
to obtain a nonlinear orientation--preserving involution $\psi:\R^2\to\R^2$ that is not $C^1$ linearizable via the standard map. To proceed with the construction, let $\eta:\R\to [0,\pi]$ be a $C^1$ bump function such that $\eta^{-1}(\pi)=[-1,1]$ and
$\eta^{-1}(0)=\R\setminus (-2,2)$. Now let $\phi:\R^2\to [-\pi,\pi]$ be the $C^1$ function defined by $$\phi(p)=\eta\big(\Vert p-(3,3) \Vert^2\big)-\eta\big(\Vert p-(-3,-3) \Vert^2\big).$$
Notice that the function $\phi$ associates to each $p\in\R^2$ an angle $\phi(p)$. Points close to $(3,3)$ 
are associated the angle $\pi$ whereas points close to $(-3,-3)$ are associated the angle $-\pi$. Besides,
$\phi$ vanishes on $A=\R^2\setminus \big(B_2(3,3)\cup B_2(-3,-3)\big)$, where $B_r(p)$ denotes the closed ball of ratio $r$ centered at $p$.
Given $\theta\in\R$, let $R_\theta:\R^2\to\R^2$ be the rotation by $\theta$ defined by
$$
R_\theta
\begin{pmatrix}
x\\ y
\end{pmatrix}=
\begin{pmatrix}
{\rm cos}\, \theta & {\rm sin}\,\theta\\
-{\rm sin}\,\theta & {\rm cos}\,\theta
\end{pmatrix} 
\begin{pmatrix}
x\\y
\end{pmatrix}.
$$
Let $\rho_+:\R^2\to\R^2$ and $\rho_-:\R^2\to\R^2$ be defined by
\begin{eqnarray*}
\rho_+(p)&=&(3,3)+R_{\phi(p)}\big(p-(3,3)\big)\\
\rho_-(p)&=&(-3,-3)+R_{\phi(p)}\big(p-(-3,-3)\big),
\end{eqnarray*}
where $p$ varies on $\R^2$.
Let $\xi:\R^2\to[0,1]$ be a $C^1$ function such that $\xi\vert_{B_2(3,3)}=1$ and $\xi\vert_{B_2(-3,-3)}=0$.
We may choose $\xi$ to be invariant under rotations centered at $(3,3)$ or $(-3,-3)$, that is, we may suppose that $\xi\circ \rho_+=\xi$ and $\xi\circ\rho_-=\xi$.

Let $\rho:\R^2\to\R^2$ be the $C^1$ map defined by:
$$
\rho(p)=\xi(p)\rho_+(p)+(1-\xi(p))\rho_-(p),\quad p\in\R^2.
$$
The map $\rho$ takes each circle $C$ of ratio $r$ centered at $(3,3)$ $($respectively at $(-3,-3))$ onto itself acting as a rotation by $\phi(3+r,3)$ $($respectively by $\phi(-3+r,-3))$ around $(3,3)$ $($respectively $(-3,-3))$. Furthermore, $\phi$ is the identity map on $A$.

Let $\psi=\rho\circ \varphi$. We claim that $\psi$ is an orientation--preversing $C^1$ involution. Firstly observe that $\phi(-p)=-\phi(p)$ for all $p\in\R^2$. In other words, $\phi\circ\varphi=-\phi$. By the definiton of $\xi$ and $\rho$, we have that $\phi\circ \rho=\phi$. In this way,
for all $p\in\R^2$ we have that:
$$\psi(p)=\rho\circ\varphi(p)=R_{\phi\circ\varphi(p)}(-p)=R_{-\phi(p)}(-p).$$ 
It follows from the above that
\begin{eqnarray*}
\psi^2(p)&=&\rho\circ\varphi\circ\psi(p)=R_{\phi\circ\varphi\circ\psi(p)}\varphi\circ\psi(p)=
R_{-\phi\circ\psi(p)}\big(-R_{-\phi(p)}\big)(-p)=\\
&=&R_{-\phi\circ\rho\circ\varphi(p)}R_{-\phi(p)}(p)=R_{-\phi\circ\varphi(p)}R_{-\phi(p)}(p)=
R_{\phi(p)}R_{-\phi(p)}(p)=p.
\end{eqnarray*}
Now by the definition of $\rho$ and as $\varphi(B_1(3,3))=B_1(-3,-3)$, we have that for all $p$ in the interior of $B_{1}(3,3)$,
$$D\psi(p)=D(\rho\circ\varphi)(p)=D\rho(\varphi(p))D\varphi(p)=R_{-\pi}\varphi=I.$$
In this way, ${\rm Spc}\,(\psi)\supset \{1\}$ and thus Theorem A and Corollary A cannot be applied.
Notice that $g(p)=D\psi(0)(p)+\psi(p)=-p+(p-(6,6))=(-6,-6)$ for all $p\in B_{1}(3,3)$. In this way, $h=\frac12 D\psi(0)g$ is not injective and $\mathscr{F}_\psi$ is very degenerate.
\end{exampleC}

\begin{exampleD} Let $\gamma:\R\to\R^2$ be a $C^1$ embedding of the real line such that $\gamma\big((-\infty,1]\big)=(-\infty,1]\times\{0\}$ and $\gamma\cap \big(\{0\}\times (-\infty,+\infty) \big)=\{0,p,q\}$,
where $0=(0,0)$, $p=(0,a)$ and $q=(0,b)$. Let $\varphi:\R^2\to\R^2$ be an orientation--reversing $C^\infty$ involution such that
${\rm Fix}\,(\varphi)=\gamma(\R)$ and $D\varphi(0):(x,y)\mapsto (x,-y)$. Let $h:\R^2\to\R^2$ be the standard map for $\varphi$, that is, $h=I+D\varphi(0)\varphi$. We have that $\varphi(p)=p$, $\varphi(q)=q$ and
\begin{eqnarray*}
h(p)&=&h(0,a)=(0,a)+D\varphi(0)\varphi\big((0,a)\big)=(0,a)+(0,-a)=(0,0)\\
h(q)&=&h(0,b)=(0,b)+D\varphi(0)\varphi\big((0,b)\big)=(0,b)+(0,-b)=(0,0).
\end{eqnarray*}
Hence, $h$ is not injective and thus $\varphi$ is not $C^1$ linearizable via the standard map.
\end{exampleD}

%%%%%%%%%%%%%%%%%%%%%%%%%%%%%%%%%

\bibliographystyle{amsplain}

\end{document}